\newcommand{\ef}{\end{equation}}
\chardef\bslash=`\\ 
\newtheorem{thm}{Theorem} [section]
\newtheorem*{thm*}{Theorem}
\newtheorem{cor}[thm]{Corollary}
\newtheorem{lem}[thm]{Lemma}
 \newtheorem{defn}[thm]{Definition}
 \theoremstyle{remark}
\newtheorem{remark}[thm]{Remark}
\newtheorem{acknowledgment*}[thm] {Acknowledgment}
\newcommand{\thmref}[1]{Theorem~\ref{#1}}
\newcommand{\corref}[1]{Corollary~\ref{#1}}
 \renewcommand{\sectionmark}[1]{}
\newcommand{\doe}{\overset{\text{def}}{=}}
\newcommand{\loc} {\operatorname{loc}}
 \date{}
\begin{document}

\title[Correct Solvability
of the Sturm-Liouville Equation With Delayed Argument]{Correct Solvability
of the Sturm-Liouville Equation With Delayed Argument}
\author[N.A. Chernyavskaya]{N.A. Chernyavskaya}\address{Department of Mathematics, Ben-Gurion University of the
Negev, P.O.B. 653, Beer-Sheva, 84105, Israel}
\email{nina@math.bgu.ac.il}
\author[L.A. Shuster]{L.A. Shuster}
 \address{Department of Mathematics,
 Bar-Ilan University, 52900 Ramat Gan, Israel}
 \email{miriam@macs.biu.ac.il}

\begin{abstract}
We consider  the equation
\begin{equation}\label{1}
-y''(x)+q(x)y(x-\varphi(x))=f(x),\quad x\in \mathbb R
\end{equation}
where $f\in C(\mathbb R)$ and
\begin{equation}\label{2}
0\le \varphi \in C^{\loc}(R),\quad  1\le q\in C^{\loc}(\mathbb R).
\end{equation}
Here $C^{\loc}(\mathbb R)$ is the set of functions continuous in every point of the number
axis. By a solution of \eqref{1}, we mean any function $y,$ doubly continuously
differentiable everywhere in $\mathbb R,$ which satisfies \eqref{1}. We show that under
certain additional conditions on the functions $\varphi$ and $q$ to \eqref{2},
\eqref{1} has a unique solution $y,$ satisfying the inequality
$$\|y\|_{C(\mathbb R)}\le c\|f\|_{C(\mathbb R)}$$
where the constant $c\in(0,\infty)$ does not depend on the choice
of $f\in C(\mathbb R).$
\end{abstract}

\subjclass[2010]{34B05, 34B24, 34K06}
\maketitle

\baselineskip 20pt

\section{Introduction}\label{introduction}
\setcounter{equation}{0} \numberwithin{equation}{section}

In the present paper, we consider the equation
\begin{equation}\label{1.1}
-y''(x)+q(x)y(x-\varphi(x))=f(x),\quad x\in \mathbb R
\end{equation}
where $f\in C(\mathbb R)$ and
\begin{equation}\label{1.2}
0\le \varphi \in C^{\loc}(R),\quad  1\le q\in C^{\loc}(\mathbb R).
\end{equation}

By the symbol $C^{\loc}(\mathbb R),$ we denote the set of functions continuous in every
point of the number axis $\mathbb R.$

By a solution of \eqref{1.1} we mean any doubly continuously differentiable function
$y(x)$, satisfying \eqref{1.1} for all $x\in\mathbb R.$ In addition, we say that
equation \eqref{1.1} is correctly solvable in $C(\mathbb R)$ if the following
assertions hold:
\begin{enumerate}
\item[I)] for every function $f\in C(\mathbb{R})$ equation \eqref{1.1} has a unique
solution $y\in C(\mathbb R);$
\item[II)] there is a constant $c\in(0,\infty)$ such that regardless of the choice
of $f\in C(\mathbb R),$ the solution $y\in C(\mathbb R)$ of \eqref{1.1} satisfies
the inequality
\begin{equation}\label{1.3}\|y\|_{C(\mathbb R)}\le c\|f\|_{C(\mathbb R)}.
\end{equation}
\end{enumerate}

Our goal is to study, in a conceptual way, the problem of correct
solvability of equation \eqref{1.1} in the space $C(\mathbb R)$.
(For brevity, below we say ``the question on I)--II", or ``the
problem I)--II)".) Such an investigation is needed because this is
the first time the problem I)--II) is being posed. Indeed, to the
best of our knowledge, for equations with delayed argument there
has been studied initial and boundary value problems on a
finite segment or on a semi-axis (see \cite{1,7,11,12,13}). However, the
special feature of problem I)--II) is that equation \eqref{1.1} is
considered on the whole axis, and requirements to its solutions
are imposed apart from I)--II). Therefore, the main result of the
paper is statement asserting that problem I)--II) makes sense,
i.e., the set of equations \eqref{1.1} correctly solvable in
$C(\mathbb R)$ is non-empty. This statement follows from the
following theorem which is our main result.

\begin{thm}\label{thm1.1}
If, together with \eqref{1.2}, the following two conditions hold:
\begin{enumerate}
\item[{\rm 1)}] there is a constant $a\ge 1$ such that for all
$x\in\mathbb R$ the following inequalities hold:
\begin{equation}\label{1.4}a^{-1}q(x)\le q(t)\le a q(x)\quad\text{for}\quad \forall t\in
[x-1,x+1]; \end{equation} \item[{\rm 2)}]
\begin{equation}\label{1.5}
\sigma\le1/6\sqrt
a,\quad\text{where}\quad\sigma\doe \sup_{x\in\mathbb R}(\varphi(x)q(x)),\end{equation}
\end{enumerate} then equation \eqref{1.1} is
correctly solvable in $C(\mathbb R).$ In addition, equation
\eqref{1.1} is separable in $C(\mathbb R),$ i.e., there is a
constant $c\in(0,\infty)$ such that regardless of the choice of
$f\in C(\mathbb R),$ the solution $y\in C(\mathbb R)$ of
\eqref{1.1} satisfies the inequality
\begin{equation}\label{1.6} \|y''(x)\|_{C(\mathbb
R)}+\|q(x)y(x-\varphi(x))\|_{C(\mathbb R)}\le c\|f(x)\|_{C(\mathbb R)}.
\end{equation}
\end{thm}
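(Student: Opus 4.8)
The plan is to treat the delayed term $q(x)y(x-\varphi(x))$ as a perturbation of the ordinary Sturm--Liouville term $q(x)y(x)$, to reduce \eqref{1.1} to an operator equation $h=f+Bh$ in $C(\mathbb R)$, and to use \eqref{1.5} to make that operator a strict contraction. First I would record what is needed about the unperturbed equation. Under \eqref{1.2} and \eqref{1.4} the equation $-z''+q(x)z(x)=h(x)$, $x\in\mathbb R$, is correctly solvable and separable in $C(\mathbb R)$: each $h\in C(\mathbb R)$ gives a unique solution $z=Gh\in C(\mathbb R)$, the Green function is non-negative, $\|Gh\|_{C(\mathbb R)}\le\|h\|_{C(\mathbb R)}$ (here $q\ge1$ enters, via the maximum principle applied to $h\equiv1$), and $\|(Gh)''\|_{C(\mathbb R)}+\|q\,Gh\|_{C(\mathbb R)}\le c_0\|h\|_{C(\mathbb R)}$. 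From this I would extract the pointwise gradient bound
\[
|(Gh)'(x)|\le\frac{c_1}{\sqrt{q(x)}}\,\|h\|_{C(\mathbb R)},\qquad x\in\mathbb R,
\]
by applying a Landau--Kolmogorov interpolation inequality to $Gh$ on the interval $\bigl[x-q(x)^{-1/2},\,x+q(x)^{-1/2}\bigr]$: it lies in $[x-1,x+1]$ because $q\ge1$, on it $q(t)$ is comparable to $q(x)$ by \eqref{1.4}, and there $\|Gh\|$ is controlled by $c_0\|h\|/q(x)$ and $\|(Gh)''\|$ by $c_0\|h\|$. The dependence of $c_1$ on $a$ has to be tracked, because the constant $1/(6\sqrt a)$ in \eqref{1.5} is calibrated precisely so that the contraction estimate below closes.

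Next I would rewrite \eqref{1.1}. Since $q(x)y(x-\varphi(x))=q(x)y(x)-q(x)\bigl[y(x)-y(x-\varphi(x))\bigr]$, a function $y$ with $y,y''\in C(\mathbb R)$ solves \eqref{1.1} iff $-y''+q\,y=f+q\bigl[y-y(\cdot-\varphi)\bigr]$, iff $y=Gf+G\bigl(q[y-y(\cdot-\varphi)]\bigr)$ --- the last step being legitimate because then $y'\in C(\mathbb R)$ and, by \eqref{1.5}, $0\le\varphi(x)\le q(x)^{-1}\le1$, so $\bigl|q(x)\bigl(y(x)-y(x-\varphi(x))\bigr)\bigr|=\bigl|q(x)\int_{x-\varphi(x)}^{x}y'\bigr|\le\sigma\|y'\|_{C(\mathbb R)}$ and $q[y-y(\cdot-\varphi)]\in C(\mathbb R)$. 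Introducing $(Bh)(x):=q(x)\bigl[(Gh)(x)-(Gh)(x-\varphi(x))\bigr]=q(x)\int_{x-\varphi(x)}^{x}(Gh)'(t)\,dt$ on $C(\mathbb R)$, the above shows that $y$ with $y,y''\in C(\mathbb R)$ solves \eqref{1.1} iff $h:=f+q[y-y(\cdot-\varphi)]$ solves $h=f+Bh$ and $y=Gh$. The key estimate is now immediate: for $t\in[x-\varphi(x),x]\subseteq[x-1,x]$ condition \eqref{1.4} gives $q(t)\ge a^{-1}q(x)$, so by the gradient bound
\[
|(Bh)(x)|\le q(x)\varphi(x)\max_{[x-\varphi(x),x]}|(Gh)'|\le q(x)\varphi(x)\cdot\frac{c_1\sqrt a}{\sqrt{q(x)}}\,\|h\|_{C(\mathbb R)}=c_1\sqrt a\,\frac{\varphi(x)q(x)}{\sqrt{q(x)}}\,\|h\|_{C(\mathbb R)}\le c_1\sqrt a\,\sigma\,\|h\|_{C(\mathbb R)},
\]
and \eqref{1.5} turns this into $\|B\|_{C(\mathbb R)\to C(\mathbb R)}\le c_1\sqrt a\,\sigma\le c_1/6$, which is $<1$ once the first step is carried out with $c_1<6$ uniformly in $a\ge1$.

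With $\|B\|<1$ the operator $I-B$ is boundedly invertible in $C(\mathbb R)$; then $h:=(I-B)^{-1}f$ and $y:=Gh$ solve \eqref{1.1} and satisfy $\|y\|_{C(\mathbb R)}\le\|h\|_{C(\mathbb R)}\le(1-c_1\sqrt a\,\sigma)^{-1}\|f\|_{C(\mathbb R)}$, which is I) and \eqref{1.3}. Since $h=f+Bh\in C(\mathbb R)$, separability of the unperturbed equation gives $\|y''\|_{C(\mathbb R)}+\|q\,y\|_{C(\mathbb R)}\le c_0\|h\|_{C(\mathbb R)}$, and then $\|q\,y(\cdot-\varphi)\|_{C(\mathbb R)}$ is controlled by \eqref{1.4} together with $\varphi\le q^{-1}$ (so that $y(\cdot-\varphi)$ is evaluated at points within distance $1$, where $q$ is comparable to its value at $x$); this yields \eqref{1.6}. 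For uniqueness, if $y_1,y_2\in C(\mathbb R)$ both solve \eqref{1.1}, then $u:=y_1-y_2$ satisfies $-u''+q\,u=q[u-u(\cdot-\varphi)]$; the equation forces the a priori bound $|u'(x)|\le\mathrm{const}\cdot\sqrt{q(x)}$ on any $C(\mathbb R)$-solution, which is enough to run the same argument (in the weighted space $\{v:\sup_x|v(x)|/\sqrt{q(x)}<\infty\}$ if necessary) and conclude $q[u-u(\cdot-\varphi)]=B\bigl(q[u-u(\cdot-\varphi)]\bigr)$, hence $q[u-u(\cdot-\varphi)]\equiv0$, hence $u=u(\cdot-\varphi)$ and $-u''+q\,u=0$ in $C(\mathbb R)$, which forces $u\equiv0$.

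I expect the main obstacle to be the quantitative gradient estimate of the first step: one needs an explicit constant $c_1$ such that $c_1\sqrt a\,\sigma\le c_1/6<1$ for every $a\ge1$, and a crude combination of the separability constants is too wasteful for this. The sharp estimate should come from the Riccati equation for the principal solutions $u_{\pm}$ of $-u''+q\,u=0$, which under \eqref{1.4} yields $|u_{\pm}'/u_{\pm}|$ comparable to $\sqrt q$ with controllable constants, and hence the needed bounds for the Green function $G(x,t)$ and for $\partial_x G(x,t)$. A secondary, more technical, point --- again handled through $\varphi(x)q(x)\le\sigma$ --- is to verify that every function to which $G$ is applied genuinely belongs to $C(\mathbb R)$ (or to the weighted space above), so that all the operator manipulations are legitimate.
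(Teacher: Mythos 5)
Your overall architecture --- absorb the delay into the Sturm--Liouville part via $q(x)y(x-\varphi(x))=q(x)y(x)-q(x)\int_{x-\varphi(x)}^{x}y'(t)\,dt$ and solve a single scalar fixed-point equation $h=f+Bh$ --- is viable and genuinely different from the paper's, which instead uses the second-order Taylor identity \eqref{2.16} to write $q(x)y(x-\varphi(x))=q(x)y(x)-q(x)\varphi(x)y'(x)+(Ay'')(x)$ with $(Af)(x)=q(x)\int_{x-\varphi(x)}^{x}f(\xi)(\xi-x+\varphi(x))\,d\xi$, and then runs a contraction for the pair $(y,y')$ as a $2\times2$ system (Lemmas \ref{lem3.7}, \ref{lem4.1}--\ref{lem4.3}). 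But as written your proof has a genuine gap exactly where you yourself suspected it: everything rests on a weighted gradient bound $|(Gh)'(x)|\le c_1\|h\|_{C(\mathbb R)}/\sqrt{q(x)}$ with $c_1<6$ \emph{uniformly in} $a\ge1$, and no such constant exists. The interpolation route you sketch gives at best $c_1\sim a^2$ (on $[x-1,x+1]$ one only controls $\sup|Gh|\le 4a\|h\|/\inf q\le 4a^{2}\|h\|/q(x)$ via \eqref{3.12} and \eqref{1.4}); the paper's sharpest weighted estimate \eqref{3.10} gives $\|\sqrt q\,\tfrac{d}{dx}\mathcal L^{-1}\|\le 8a^{3/2}$; and the Riccati bounds \eqref{2.6}, \eqref{3.4} you appeal to only yield $|u'|/u,\ v'/v\le 2\sqrt{a}\sqrt q$. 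Every route loses a positive power of $a$, which the single factor $1/\sqrt a$ in \eqref{1.5} cannot absorb, so $\|B\|\le c_1\sqrt a\,\sigma\le c_1/6<1$ does not close --- not even for $a$ near $1$. (One can also see directly that $\sup_x\sqrt{q(x)}\,|(G1)'(x)|$ grows with $a$ by taking $q$ to jump from $1$ to a large constant across a unit interval.)

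The repair is immediate, however, and makes your route arguably simpler than the paper's: drop the weight. The unweighted bound $\|(Gh)'\|_{C(\mathbb R)}\le\sqrt2\,\|h\|_{C(\mathbb R)}$ --- the paper's Lemma \ref{lem3.2}, proved from the Wronskian identity together with \eqref{2.6} and \eqref{3.1} --- gives at once $|(Bh)(x)|\le q(x)\varphi(x)\sup_t|(Gh)'(t)|\le\sqrt2\,\sigma\,\|h\|_{C(\mathbb R)}\le\tfrac{\sqrt2}{6\sqrt a}\|h\|_{C(\mathbb R)}$, hence $\|B\|\le\sqrt2/6<1$: the smallness is already carried entirely by the factor $q(x)\varphi(x)\le\sigma$, and no weighted derivative estimate is needed. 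This is precisely the mechanism the paper uses for its own first-order term $q\varphi y'$, pairing $\|q\varphi z\|\le\sigma\|z\|$ with $\|\tfrac{d}{dx}G\|\le\sqrt2$ in Lemma \ref{lem4.2}; the paper's extra operator $A$ and the $2\times2$ system are the price of its second-order decomposition, which your first-order one avoids. With that change your existence, a priori and separability arguments go through. One remaining point to tighten is uniqueness: to identify an arbitrary $C(\mathbb R)$-solution $y$ of \eqref{1.1} with $G\bigl(f+q[y-y(\cdot-\varphi)]\bigr)$ you need $q(x)|y(x)-y(x-\varphi(x))|$ to be bounded, i.e.\ an a priori bound on $y'$, which is not free; your instinct to work in a weighted space is reasonable but that step still has to be carried out (the paper's own one-line uniqueness claim is no more detailed).
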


\begin{cor}\label{cor1.2}
There is a constant $c\in(0,\infty)$ such that the solution $y\in
C(\mathbb R)$ of \eqref{1.1} satisfies the
estimate\begin{equation}\label{1.7} \|q(x)y(x)\|_{C(\mathbb R)}\le
c\|f(x)\|_{C(\mathbb R)},\quad \forall f\in C(\mathbb R).
\end{equation}
\end{cor}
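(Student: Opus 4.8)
The plan is to deduce \corref{cor1.2} from \thmref{thm1.1} by controlling the discrepancy between $y(x)$ and $y(x-\varphi(x))$ via the first derivative $y'$, which on every interval of unit length is bounded by an elementary interpolation inequality involving $\|y\|_{C(\mathbb R)}$ and $\|y''\|_{C(\mathbb R)}$.

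First I would note that \eqref{1.5} together with $q\ge 1$ forces the delay to be small: for all $x\in\mathbb R$,
\[
0\le\varphi(x)=\frac{\varphi(x)q(x)}{q(x)}\le\sigma\le\frac{1}{6\sqrt a}<1 .
\]
Hence $x-\varphi(x)\in[x-1,x]$, and by the fundamental theorem of calculus $y(x)-y(x-\varphi(x))=\int_{x-\varphi(x)}^{x}y'(t)\,dt$, so that $|y(x)-y(x-\varphi(x))|\le\varphi(x)\max_{t\in[x-1,x]}|y'(t)|$. Multiplying by $q(x)$ and using $q(x)\varphi(x)\le\sigma$ gives, for every $x\in\mathbb R$,
\[
q(x)\,|y(x)|\le q(x)\,|y(x-\varphi(x))|+\sigma\max_{t\in[x-1,x]}|y'(t)| .
\]

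Next I would bound $\max_{t\in[x-1,x]}|y'(t)|$ uniformly in $x$. By the mean value theorem there is $\xi\in(x-1,x)$ with $y'(\xi)=y(x)-y(x-1)$, whence $y'(t)=y'(\xi)+\int_{\xi}^{t}y''(s)\,ds$ for $t\in[x-1,x]$, and therefore $|y'(t)|\le 2\|y\|_{C(\mathbb R)}+\|y''\|_{C(\mathbb R)}$. Taking the supremum over $x\in\mathbb R$ in the preceding display and invoking the estimates \eqref{1.3} and \eqref{1.6} of \thmref{thm1.1} yields
\[
\|q(x)y(x)\|_{C(\mathbb R)}\le\|q(x)y(x-\varphi(x))\|_{C(\mathbb R)}+\sigma\bigl(2\|y\|_{C(\mathbb R)}+\|y''\|_{C(\mathbb R)}\bigr)\le c\,\|f\|_{C(\mathbb R)},
\]
where the final constant depends only on the constant of \thmref{thm1.1} (here $3\sigma\le 1/2$ suffices). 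This is \eqref{1.7}.

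There is no genuine obstacle: the substantive work — existence, uniqueness, and the a priori bounds \eqref{1.3}, \eqref{1.6} — is already carried by \thmref{thm1.1}, and the corollary is a short consequence of the fundamental theorem of calculus together with a unit-interval interpolation inequality for $y'$. The only mild subtlety is that this interpolation inequality must be applied on intervals of a \emph{fixed} length, which is legitimate precisely because \eqref{1.5} guarantees $\varphi\le 1$.
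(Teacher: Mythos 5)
Your proof is correct, but it takes a different route from the paper's. The paper's own proof of \corref{cor1.2} is a one-line citation of the estimate \eqref{4.25}, $\|y''\|_{C(\mathbb R)}+\|qy\|_{C(\mathbb R)}\le c\|f\|_{C(\mathbb R)}$, which is established \emph{inside} the proof of \thmref{thm1.1}: there the solution is constructed as $y=\mathcal L^{-1}w$ for a controlled right-hand side $w$ of the \emph{non-delayed} equation $-y''+qy=w$, and the separability of $\mathcal L$ (Theorem~\ref{thm3.5}, estimate \eqref{3.12}) bounds $\|qy\|$ directly — no comparison between $y(x)$ and $y(x-\varphi(x))$ is needed. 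You instead work only from the stated conclusions \eqref{1.3} and \eqref{1.6} of \thmref{thm1.1}, writing $q(x)|y(x)|\le q(x)|y(x-\varphi(x))|+q(x)\varphi(x)\sup|y'|$, using $q\varphi\le\sigma$ and the unit-interval Landau-type bound $\|y'\|_{C(\mathbb R)}\le 2\|y\|_{C(\mathbb R)}+\|y''\|_{C(\mathbb R)}$. Your argument is sound (note that you do not actually need $\sigma$ small here, only finite), and it has the virtue of being a genuine corollary of the theorem as stated, independent of how the theorem was proved; the paper's version is shorter but leans on an intermediate estimate from the proof. One cosmetic point: the hypothesis $\varphi\le 1$ is used only to confine $x-\varphi(x)$ to a unit interval, while the interpolation inequality for $y'$ holds on unit intervals regardless, so your parenthetical remark about the fixed length is about the first step, not the second.
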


The paper is constructed as follows. In \S2, we collect the
information needed for the proofs; in \S3, we present some
auxiliary assertions; \S4 contains a proof of \thmref{thm1.1};
and, finally, in \S5 we give an example of this theorem.

The authors thank Prof. B. Sklyar for productive discussions and
useful remarks.

\section{Preliminaries}

The information presented below is used in the proofs. Here and
throughout the sequel, we assume that conditions \eqref{1.2} are
satisfies. They are not referred to and do not appear in the
statements.

\begin{thm}\label{thm2.1} \cite{6,4} There exists a fundamental
system of solutions (FSS) $\{u(x),v(x)\},$ $x\in\mathbb R$ of the
equation
\begin{equation}\label{2.1}
z''(x)=q(x)z(x),\qquad x\in\mathbb R
\end{equation}
which has the following properties:
\begin{equation}
\begin{aligned}\label{2.2}
u(x)>0,\quad &v(x)>0,\quad u'(x)<0,\quad v'(x)>0,\qquad
x\in\mathbb R,\\
&v'(x)u(x)-u'(x)v(x)=1,\qquad x\in\mathbb R
\end{aligned}
\end{equation}
 \begin{align*}
 \lim_{x\to-\infty} v(x)&=\lim_{x\to-\infty}v'(x)=\lim_{x\to\infty}
 u(x)=\lim_{x\to\infty}u'(x)=0\\
 \lim_{x\to\infty} v(x)&=\lim_{x\to\infty}v'(x)=\lim_{x\to-\infty}
 u(x)=\lim_{x\to-\infty}|u'(x)|=\infty.
\end{align*}
\begin{equation}\label{2.3}
|p'(x)|<1,\qquad x\in\mathbb R;\qquad \rho(x)=u(x)v(x),\qquad
x\in\mathbb R,\end{equation}
\begin{equation}\label{2.4}
\frac{|u'(x)|}{u(x)}=\frac{1-\rho'(x)}{2\rho(x)},\qquad
\frac{v'(x)}{v(x)}=\frac{1+\rho'(x)}{2\rho(x)},\qquad x\in\mathbb
R.\end{equation}
\end{thm}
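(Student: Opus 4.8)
The plan is to realize $u$ as the principal (recessive) solution of \eqref{2.1} at $+\infty$ and $v$ as the principal solution at $-\infty$, and then to read off every assertion from the sign condition $q\ge 1$. The starting point is that \eqref{2.1} is \emph{disconjugate} on $\mathbb{R}$: a nontrivial solution cannot vanish twice, since between two consecutive zeros it would attain an interior extremum $\xi$ with $z(\xi)\ne 0$ while $z''(\xi)=q(\xi)z(\xi)$ has the sign of $z(\xi)$ --- impossible at an extremum --- and a double zero would force $z\equiv 0$ by uniqueness.

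To construct $u$, for each $n>0$ let $u_n$ solve \eqref{2.1} with $u_n(n)=0$, $u_n'(n)=-1$. By disconjugacy $u_n>0$ on $(-\infty,n)$, and $u_n''=qu_n>0$ there forces $u_n'$ to be increasing, hence $u_n'<-1<0$ on $(-\infty,n)$. Normalizing, set $\widehat u_n=u_n/u_n(0)$ (with $u_n(0)>0$); then $\widehat u_n'(0)=u_n'(0)/u_n(0)<0$, and by the non-crossing of solutions of the associated Riccati equation $r'+r^2=q$ (here $r_n=u_n'/u_n$ blows down to $-\infty$ as $x\to n^-$, so $r_n(0)$ is nondecreasing in $n$), $\widehat u_n'(0)$ converges. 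Hence $\widehat u_n\to u$ uniformly on compacta, where $u$ solves \eqref{2.1} with $u(0)=1$. For every fixed $x$ one has $\widehat u_n(x)>0$ and $\widehat u_n'(x)<0$ as soon as $n>x$, so $u\ge 0$ and $u'\le 0$ on $\mathbb{R}$; since $u\ge 0$ and $u\not\equiv 0$, a zero of $u$ would be a double zero and force $u\equiv0$, so $u>0$ everywhere, and since $u''=qu>0$ makes $u'$ strictly increasing, $u'<0$ everywhere. Running the same construction for $q(-\,\cdot\,)$ gives $v>0$, $v'>0$, $v''=qv>0$ on $\mathbb{R}$. Finally $u$ and $v$ are independent: $v=cu$ would give $v'=cu'$, and the signs force $c>0$ and $c<0$ at once. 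This establishes the first line of \eqref{2.2} and that $\{u,v\}$ is an FSS.

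For the limit relations, use $u''=qu\ge u$. As $x\to+\infty$, $u$ decreases to some $\ell\ge 0$ and $u'$ increases to some $\ell'\le 0$; $\ell'<0$ would give $u\to-\infty$, so $\ell'=0$, and $\ell>0$ would give $u'(x)\ge u'(0)+\ell x\to+\infty$, so $\ell=0$. Since $u\ge u(0)>0$ on $(-\infty,0]$, there $u''\ge u(0)$, so $u'(x)\le u'(0)-u(0)|x|\to-\infty$ as $x\to-\infty$; integrating once more, a finite limit of $u$ at $-\infty$ would be contradicted, so $u\to+\infty$ and $u'\to-\infty$, i.e.\ $|u'|\to\infty$. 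The assertions for $v$ follow by the same argument. This yields all the limit statements.

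Lastly, the Wronskian $W=v'u-u'v$ is constant, since $W'=v''u-u''v=quv-quv=0$, and positive, since $v'u>0$ and $-u'v>0$; replacing $u,v$ by $u/\sqrt{W}$, $v/\sqrt{W}$ we may assume $v'u-u'v=1$ without disturbing the signs or limits. Put $\rho=uv>0$, so $\rho'=u'v+uv'$. Adding and subtracting the identity $uv'-u'v=1$ gives $1+\rho'=2uv'>0$ and $1-\rho'=-2u'v=2|u'|v>0$, hence $|\rho'|<1$; dividing these by $\rho=uv$ yields exactly $v'/v=(1+\rho')/(2\rho)$ and $|u'|/u=(1-\rho')/(2\rho)$, which are \eqref{2.4} (and $\rho=uv$ together with $|\rho'|<1$ is \eqref{2.3}). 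The one genuinely delicate point is the construction step --- extracting from the $u_n$ a nontrivial, everywhere positive solution $u$ with $u'<0$, i.e.\ the classical existence of principal solutions of a disconjugate equation, cf.\ \cite{6,4}; once the sign pattern of $\{u,v\}$ is in hand, the limits and the identities \eqref{2.3}--\eqref{2.4} follow quickly from $q\ge 1$ and the constancy of $W$.
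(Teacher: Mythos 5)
The paper does not prove \thmref{thm2.1} at all --- it is imported verbatim from \cite{6,4} as a known fact --- so there is no internal argument to compare against. Your proof is a correct and essentially complete self-contained derivation, and it follows the standard route one would expect the cited references to take: disconjugacy of $z''=qz$ for $q\ge 1$, construction of the principal solution $u$ at $+\infty$ as a limit of normalized solutions $u_n$ vanishing at $x=n$ (with the monotonicity of $r_n(0)=u_n'(0)/u_n(0)$ coming from non-crossing of Riccati trajectories, which legitimately yields convergence of the initial slopes and hence locally uniform convergence of the $\widehat u_n$), the symmetric construction of $v$ at $-\infty$, the limit relations from $u''\ge u$ and $v''\ge v$, normalization of the constant Wronskian, and finally the algebraic identities $1+\rho'=2uv'$, $1-\rho'=-2u'v$ which give $|\rho'|<1$ (the theorem's ``$|p'(x)|<1$'' is a typo for $|\rho'(x)|<1$, and you prove the correct statement) together with \eqref{2.4}. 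The one step you flag as delicate --- extracting the everywhere-positive limit $u$ with $u'<0$ --- is in fact fully justified by the argument you give: $u\ge 0$ with $u(0)=1$ excludes a zero (which would be a double zero), and $u'\le 0$ together with $u''=qu>0$ excludes a zero of $u'$. What your write-up buys, relative to the paper, is that the reader need not consult \cite{6,4}; the only cost is that the cited works establish these properties under weaker hypotheses (local integrability of $q$ plus an oscillation condition), whereas your argument leans on $q\ge 1$, which is all that is assumed here.
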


\begin{lem}\label{lem2.2} \cite{3} For a given $x\in\mathbb R$
consider the following equations in $d\ge0:$
\begin{equation}\label{2.5}
\int_0^{\sqrt {2}d}\int_{x-t}^x q(\xi)d\xi
dt=1,\qquad\int_0^{\sqrt{2}d}\int_x^{x+t}q(\xi)d\xi dt=1.
\end{equation}
Each of equations \eqref{2.5} has a unique finite positive
solution.
\end{lem}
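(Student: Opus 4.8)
The plan is to handle each of the two equations in \eqref{2.5} separately; they are symmetric, so it suffices to treat the first and then repeat the argument verbatim. Fix $x\in\mathbb R$ and, after the substitution $s=\sqrt2\,d$ (which is a bijection of $[0,\infty)$ onto itself), introduce
$$g(t)=\int_{x-t}^{x}q(\xi)\,d\xi,\qquad \Phi(s)=\int_0^{s}g(t)\,dt ,$$
so that the left-hand side of the first equation in \eqref{2.5} equals $\Phi(\sqrt2\,d)$, and we must show that $\Phi(s)=1$ has a unique solution $s>0$. Since $q\in C^{\loc}(\mathbb R)$ by \eqref{1.2}, the function $g$ is continuous on $[0,\infty)$; hence $\Phi\in C^1([0,\infty))$ with $\Phi'(s)=g(s)$ by the fundamental theorem of calculus, and $\Phi(0)=0$.

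Next I would exploit the normalization $q\ge1$ from \eqref{1.2}. For every $t\ge0$ it gives $g(t)=\int_{x-t}^{x}q(\xi)\,d\xi\ge t$; therefore $\Phi'(s)=g(s)\ge s>0$ for $s>0$, so $\Phi$ is strictly increasing on $[0,\infty)$, and integrating the same inequality yields $\Phi(s)\ge s^2/2\to\infty$ as $s\to\infty$. Since $\Phi$ is continuous with $\Phi(0)=0<1$ and $\Phi(s)\to\infty$, the intermediate value theorem produces a point $s_*>0$ with $\Phi(s_*)=1$, and strict monotonicity makes it the only one. Then $d_*:=s_*/\sqrt2>0$ is the unique positive solution of the first equation in \eqref{2.5}; moreover it is finite, since $1=\Phi(s_*)\ge s_*^2/2$ forces $s_*\le\sqrt2$, i.e. $d_*\le1$.

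For the second equation one repeats this with $g$ replaced by $h(t)=\int_{x}^{x+t}q(\xi)\,d\xi$, which is again continuous on $[0,\infty)$ and satisfies $h(t)\ge t$ for $t\ge0$ by $q\ge1$; the rest is identical. There is no genuine obstacle in this lemma: the whole argument is elementary calculus, and the only points worth stating carefully are (i) that $d=0$ yields value $0<1$, so the solution cannot degenerate to $0$, and (ii) that it is the uniform lower bound $q\ge1$, not merely positivity of $q$, that simultaneously delivers strict monotonicity away from the origin and the growth of $\Phi$ — hence both existence and finiteness of the solution.
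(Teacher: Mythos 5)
Your argument is correct. Note that the paper does not actually prove \lemref{lem2.2}: it is quoted from the reference \cite{3}, where it is established under weaker hypotheses on $q$ (local integrability and non-vanishing of $\int_{-\infty}^x q$ and $\int_x^\infty q$, rather than $q\ge1$), which makes the finiteness of the solution the only delicate point there. Under the standing normalization $1\le q\in C^{\loc}(\mathbb R)$ of \eqref{1.2}, your elementary route — continuity of $\Phi(s)=\int_0^s\int_{x-t}^x q(\xi)\,d\xi\,dt$, strict monotonicity and the growth bound $\Phi(s)\ge s^2/2$ from $q\ge1$, then the intermediate value theorem — is a complete and correct proof, and as a by-product it yields $d_1(x)\le1$, $d_2(x)\le1$, which the paper re-derives separately as \lemref{lem3.1} via the same inequality $\int_0^{\sqrt2 d}\int_{x-t}^x q\,d\xi\,dt\ge d^2$. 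So you have proved slightly less general a statement than the cited source, but everything you need in the context of this paper, and with no gaps.
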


Denote by $d_1(x),$ $d_2(x)$, $x\in\mathbb R,$ the solutions of
equations \eqref{2.5}, respectively.

\begin{thm}\label{thm2.3} \cite{3} We have the inequalities
\begin{equation}\label{2.6}
\frac{1}{\sqrt2}\le \frac{|u'(x)|}{u(x)} d_2(x); \
\frac{v'(x)}{v(x)}d_1(x)\le \sqrt 2,\qquad x\in\mathbb R,
\end{equation}
\begin{equation}\label{2.7}
\frac{1}{\sqrt2}\ \frac{d_1(x)d_2(x)}{d_1(x)+d_2(x)}\le
\rho(x)\le\sqrt 2\frac{d_1(x)d_2(x)}{d_1(x)+d_2(x)},\qquad
x\in\mathbb R,
\end{equation}
\end{thm}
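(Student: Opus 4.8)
Although \thmref{thm2.3} is quoted from \cite{3}, here is how I would derive it from \thmref{thm2.1}. The plan is to pass to the logarithmic derivatives $\mu(x):=v'(x)/v(x)>0$ and $\nu(x):=|u'(x)|/u(x)=-u'(x)/u(x)>0$; differentiating and using $u''=qu$, $v''=qv$ gives the Riccati equations $\mu'=q-\mu^2$ and $\nu'=\nu^2-q$, while the Wronskian normalization $v'u-u'v\equiv1$ of \thmref{thm2.1} gives $v'u+|u'|v\equiv1$, i.e. $(\mu+\nu)\rho\equiv1$, so $\rho=1/(\mu+\nu)$. Hence \eqref{2.7} is an arithmetic consequence of \eqref{2.6}: from $\tfrac1{\sqrt2}\le\mu d_1\le\sqrt2$ and $\tfrac1{\sqrt2}\le\nu d_2\le\sqrt2$ one gets $\tfrac1{\sqrt2}(\tfrac1{d_1}+\tfrac1{d_2})\le\mu+\nu\le\sqrt2(\tfrac1{d_1}+\tfrac1{d_2})$, which is \eqref{2.7} after inverting and using $\rho=1/(\mu+\nu)$. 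Moreover the reflection $x\mapsto-x$ turns $\{u,v\}$ into the FSS of the reflected coefficient with the roles of $u$ and $v$ exchanged, hence $\mu\leftrightarrow\nu$, and it interchanges the two equations in \eqref{2.5}, hence $d_1\leftrightarrow d_2$; so it suffices to prove $\tfrac1{\sqrt2}\le\mu(x)d_1(x)\le\sqrt2$ for a fixed $x$.

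Introduce $F(\delta):=\int_0^{\delta}(\int_{x-t}^{x}q(\xi)\,d\xi)\,dt=\int_{x-\delta}^{x}q(s)(s-x+\delta)\,ds$; it is continuous and strictly increasing from $0$ to $+\infty$ (since $q\ge1$), and $F(\sqrt2\,d_1(x))=1$ by the definition of $d_1$. For the \textbf{upper bound} $\mu(x)d_1(x)\le\sqrt2$ I would start from $v(x)=\int_{-\infty}^xv'(t)\,dt\ge\int_{x-\delta}^xv'(t)\,dt$ together with $v'(t)=v'(x)-\int_t^xq(s)v(s)\,ds\ge v'(x)-v(x)\int_t^xq(s)\,ds$ (valid because $v$ is increasing, so $v(s)\le v(x)$ for $s\le x$); integrating over $t\in[x-\delta,x]$ gives $v(x)\ge\delta v'(x)-v(x)F(\delta)$, i.e. $\mu(x)\le(1+F(\delta))/\delta$ for every $\delta>0$. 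Taking $\delta=\sqrt2\,d_1(x)$, where $F(\delta)=1$, yields $\mu(x)\le 2/(\sqrt2\,d_1(x))=\sqrt2/d_1(x)$.

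For the \textbf{lower bound} $\mu(x)d_1(x)\ge\tfrac1{\sqrt2}$, since $F$ is increasing and $F(\sqrt2d_1(x))=1$ it is equivalent to show $F(1/\mu(x))\le1$. The key is a convexity-type lower bound for $v$ near $x$: from $(1/\mu)'=1-q/\mu^2\le1$ one gets $1/\mu(s)\ge1/\mu(x)-(x-s)$ for $s\le x$, hence for $x-1/\mu(x)<s\le x$, $\int_s^x\mu(\tau)\,d\tau\le\int_s^x\frac{\mu(x)}{1-\mu(x)(x-\tau)}\,d\tau=\ln\frac1{1-\mu(x)(x-s)}$, so $v(s)\ge v(x)(1-\mu(x)(x-s))=v(x)\mu(x)(s-x+1/\mu(x))$. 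Substituting $q=v''/v$ into $F(1/\mu(x))=\int_{x-1/\mu(x)}^{x}q(s)(s-x+1/\mu(x))\,ds$ and using this bound, the factor $(s-x+1/\mu(x))$ cancels and $F(1/\mu(x))\le\frac1{v(x)\mu(x)}\int_{x-1/\mu(x)}^{x}v''(s)\,ds=\frac{v'(x)-v'(x-1/\mu(x))}{v(x)\mu(x)}\le\frac{v'(x)}{v(x)\mu(x)}=1$. Thus $1/\mu(x)\le\sqrt2\,d_1(x)$, i.e. $\mu(x)d_1(x)\ge1/\sqrt2$; the same two arguments applied to $\nu$ (equivalently, to the reflected problem) give $\tfrac1{\sqrt2}\le\nu(x)d_2(x)\le\sqrt2$, and then \eqref{2.7} follows as above.

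The one genuinely delicate point is the convexity bound $v(s)\ge v(x)(1-\mu(x)(x-s))$ near $x$: without it the integrals defining $F(1/\mu)$ would be estimated too crudely at the left endpoint, and the precise constants $\tfrac1{\sqrt2},\sqrt2$ in \eqref{2.6} (i.e. the factor $\sqrt2$ built into the definition \eqref{2.5} of $d_1,d_2$) are exactly what make the two ends $\mu\le(1+F(\delta))/\delta|_{F=1}$ and $F(1/\mu)\le1$ match up. Everything else is bookkeeping plus the basic properties of the FSS from \thmref{thm2.1} (positivity and monotonicity of $u,v$, the equations $u''=qu$, $v''=qv$, the Wronskian), which are what the Riccati reductions and the identity $\rho=1/(\mu+\nu)$ rest on.
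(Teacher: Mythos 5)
The paper does not prove this statement at all: Theorem~\ref{thm2.3} is imported verbatim from reference [3] (as is Theorem~\ref{thm2.1}), so there is no in-paper argument to compare against. Your blind derivation is, as far as I can check, correct and complete, and it is a genuinely useful supplement since it makes the paper self-contained on this point. The structure is sound: the Riccati reduction $\mu=v'/v$, $\nu=-u'/u$ with $\mu'=q-\mu^2$, $\nu'=\nu^2-q$, and the identity $(\mu+\nu)\rho\equiv1$ from the Wronskian do reduce \eqref{2.7} to the two-sided bounds $\tfrac1{\sqrt2}\le\mu d_1,\nu d_2\le\sqrt2$ by pure arithmetic (and note that both the upper and the lower bounds are in fact what the paper later uses, in Lemma~\ref{lem3.2} and Corollary~\ref{cor3.4} respectively, so proving the full two-sided version is the right call even though \eqref{2.6} is typeset ambiguously). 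The Fubini identity $F(\delta)=\int_{x-\delta}^xq(s)(s-x+\delta)\,ds$, the strict monotonicity of $F$, and the normalization $F(\sqrt2\,d_1(x))=1$ are all right. The upper bound via $v'(t)\ge v'(x)-v(x)\int_t^xq$ integrated in $t$ gives exactly $\mu\le(1+F(\delta))/\delta$, and evaluating at $F=1$ gives $\sqrt2/d_1$. The lower bound via $(1/\mu)'\le1$, hence $v(s)\ge v(x)\mu(x)(s-x+1/\mu(x))$, hence $F(1/\mu(x))\le1$ after the cancellation of the weight $(s-x+1/\mu(x))$, is the genuinely clever step and it is valid: the pointwise bound degenerates at the left endpoint $s=x-1/\mu(x)$, but after cancellation the integrand is dominated by $v''(s)/(v(x)\mu(x))$, which integrates to $(v'(x)-v'(x-1/\mu(x)))/(v(x)\mu(x))\le1$ using $v'>0$. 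The reflection $x\mapsto-x$ does carry $v$ to $u$ and $d_1$ to $d_2$ for the reflected coefficient, so the $\nu d_2$ bounds follow. I have no corrections; if anything, you might add one sentence verifying explicitly that $\tilde v(x):=u(-x)$ satisfies $\tilde v''=\tilde q\tilde v$ with $\tilde v(-\infty)=0$ and that the first equation in \eqref{2.5} for $\tilde q$ at $x$ is the second equation for $q$ at $-x$, since that is the only step stated without computation.
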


Consider the equation
\begin{equation}\label{2.8}
-y''(x)+q(x)y(x)=f(x),\qquad x\in\mathbb R.
\end{equation}
By a solution of \eqref{2.8} we mean any doubly continuously
differentiable function $y(x)$ satisfying \eqref{2.8} for all
$x\in\mathbb R.$

\begin{defn}\label{defn2.4} \cite{5} We say that equation
\eqref{2.8} is correctly solvable in $C(\mathbb R)$ if the
following conditions are satisfied:
\begin{enumerate}
\item[{\rm a)}] for every function $f\in C(\mathbb R)$ there is a
unique solution $y\in C(\mathbb R)$ of equation \eqref{2.8};
\item[{\rm b)}] there is a constant $c\in(0,\infty)$ such that
regardless of the choice of $f\in C(\mathbb R),$ the solution
$y\in C(\mathbb R)$ of \eqref{2.8} satisfies the estimate
\begin{equation}\label{2.9}
\|y\|_{C(\mathbb R)}\le c\|f\|_{C(\mathbb R)}.
\end{equation}
\end{enumerate}
\end{defn}

\begin{remark}\label{rem2.5}
By $c,$ $c(\cdot)$, we denote absolute positive constants which
are not essential for exposition and may differ even with a single
chain of calculations.
\end{remark}

\begin{thm}\label{thm2.6} \cite{5} Equation \eqref{2.8} is
correctly solvable in $C(\mathbb R).$ Its solution $y\in C(\mathbb
R)$ is of the form
\begin{equation}\label{2.10}
y(x)=(Gf)(x)=\int_{-\infty}^\infty G(x,t)f(t)dt,\qquad x\in\mathbb
R.
\end{equation}
Here $G(x,t)$ is the Green function of equation \eqref{2.8}:
\begin{equation}\label{2.11}
G(x,t)=\begin{cases} u(x)v(t),&\quad x\ge t\\
u(t)v(x),&\quad x\le t
\end{cases}
\end{equation}
\end{thm}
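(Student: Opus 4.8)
The plan is to prove everything by working directly with the explicit candidate \eqref{2.10}--\eqref{2.11}: I show that the integral converges and defines a bounded $C^2$ function solving \eqref{2.8}, that it obeys the bound \eqref{2.9}, and finally that no second bounded solution can exist. Throughout I use the FSS $\{u,v\}$ of \thmref{thm2.1} and write, for $f\in C(\mathbb R)$ with $M:=\|f\|_{C(\mathbb R)}$,
\[
y(x)=u(x)\int_{-\infty}^x v(t)f(t)\,dt+v(x)\int_x^\infty u(t)f(t)\,dt,
\]
which is exactly \eqref{2.10} after splitting the integral at $t=x$ according to the two branches of \eqref{2.11}.

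First I would settle convergence and the estimate simultaneously, and this is the conceptual core. Since $q\ge 1$ and $u,v>0$, one has $v\le qv=v''$ and $u\le qu=u''$; integrating and using the boundary behaviour $v'(-\infty)=0$ and $u'(+\infty)=0$ recorded in \thmref{thm2.1} gives $\int_{-\infty}^x v\,dt\le v'(x)$ and $\int_x^\infty u\,dt\le -u'(x)=|u'(x)|$. Hence both inner integrals are absolutely convergent, and invoking the Wronskian identity $v'(x)u(x)-u'(x)v(x)=1$ from \eqref{2.2} I obtain $\int_{-\infty}^\infty G(x,t)\,dt\le u(x)v'(x)+v(x)|u'(x)|=1$. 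Because $G\ge 0$, this yields $|y(x)|\le M\int_{-\infty}^\infty G(x,t)\,dt\le M$, so \eqref{2.9} holds (in fact with $c=1$).

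Next I would verify that $y$ is a genuine $C^2$ solution. The two inner integrals are $C^1$ in $x$ (their integrands are continuous, and the tail estimates above give local uniform control justifying differentiation under the integral sign), so differentiating the product form twice and using $u''=qu$, $v''=qv$ together with the Wronskian, the boundary terms telescope: one finds $y'(x)=u'(x)\int_{-\infty}^x v(t)f(t)\,dt+v'(x)\int_x^\infty u(t)f(t)\,dt$ and then $y''=qy-f$. Thus $-y''+qy=f$ pointwise; continuity of $y''$ follows from this identity since $q,y,f$ are continuous, and combined with the bound of the previous step we get $y\in C(\mathbb R)$, proving part~a) of \defnref{defn2.4} (existence).

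Finally, for uniqueness, the difference $w$ of two bounded solutions solves the homogeneous equation \eqref{2.1}, hence $w=\alpha u+\beta v$. The asymptotics in \thmref{thm2.1} force $\beta=0$ (otherwise $w$ is unbounded as $x\to+\infty$, where $v\to\infty$) and $\alpha=0$ (otherwise $w$ is unbounded as $x\to-\infty$, where $u\to\infty$), so $w\equiv 0$. I expect the only genuine bookkeeping to be the justification of differentiating under the integral sign and the continuity of the resulting derivatives; the estimate itself is short once the $q\ge 1$ comparison and the Wronskian normalization are combined, which is the main idea of the argument.
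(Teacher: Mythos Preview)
Your argument is correct. The paper itself does not prove \thmref{thm2.6}: it is quoted from \cite{5} as a preliminary result, so there is no in-paper proof to compare against. That said, the key estimate you isolate---namely
\[
\int_{-\infty}^{\infty} G(x,t)\,dt \;=\; u(x)\!\int_{-\infty}^{x}\! v(t)\,dt + v(x)\!\int_{x}^{\infty}\! u(t)\,dt \;\le\; u(x)v'(x)-u'(x)v(x)=1,
\]
obtained from $q\ge 1$, the identities $v'(x)=\int_{-\infty}^{x}q(t)v(t)\,dt$, $-u'(x)=\int_{x}^{\infty}q(t)u(t)\,dt$, and the Wronskian normalization---is exactly the content (and the proof) of \lemref{lem3.2} in the paper. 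So your boundedness step coincides with the paper's own computation; the remaining verification that $y=Gf$ is $C^{2}$ and solves \eqref{2.8}, and the uniqueness via the asymptotics of $u,v$ from \thmref{thm2.1}, are the standard complements that the cited reference supplies. Nothing is missing.
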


Denote
\begin{equation}\label{2.12}
\mathcal D(\mathbb R)=\{y\in C(\mathbb R):y\in
C_{\loc}^{(2)}(\mathbb R),\ -y''(x)+q(x)y(x)\in C(\mathbb R),\
x\in\mathbb R\},
\end{equation}
\begin{equation}\label{2.13}
(\mathcal L y)(x)=-y''(x)+q(x)y(x),\quad x\in\mathbb R,\quad
y\in\mathcal D(\mathbb R).
\end{equation}
Here $C_{\loc}^{(2)}(\mathbb R)$ is the set of functions doubly
continuously differentiable for $x\in\mathbb R.$

\begin{thm}\label{thm2.7} \cite{5}
The operator $\mathcal L:\mathcal D(\mathbb R)\to C(\mathbb R)$ is
continuously invertible. We have the equality (see \eqref{2.10}):
\begin{equation}\label{2.14}
\mathcal L^{-1}=G.
\end{equation}
\end{thm}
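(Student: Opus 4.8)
The plan is to show that the integral operator $G$ defined in \eqref{2.10}--\eqref{2.11} is a genuine two-sided inverse of $\mathcal L$ on the prescribed domain and that it is bounded; these two facts are exactly the assertions ``$\mathcal L$ is a bijection'' and ``$\mathcal L^{-1}$ is continuous.'' By \thmref{thm2.6} the map $G:C(\mathbb R)\to C(\mathbb R)$ is already known to be well defined and to satisfy the a priori bound \eqref{2.9}, so the continuity of $\mathcal L^{-1}$ will be automatic once the identity $\mathcal L^{-1}=G$ is established. Thus the real work is to verify that $G$ is both a right inverse and a left inverse of $\mathcal L$ relative to $\mathcal D(\mathbb R)$.

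First I would check that $G$ is a right inverse, i.e. that $Gf\in\mathcal D(\mathbb R)$ and $\mathcal L(Gf)=f$ for every $f\in C(\mathbb R)$. Writing $(Gf)(x)=u(x)\int_{-\infty}^x v(t)f(t)\,dt+v(x)\int_x^\infty u(t)f(t)\,dt$ and differentiating twice under the integral sign, the boundary contributions at $t=x$ from the first derivative cancel, and those from the second derivative combine into the Wronskian $u'v-v'u$, which equals $-1$ by \eqref{2.2}. Using $u''=qu$ and $v''=qv$ from \eqref{2.1}, a short computation gives $(Gf)''=q\,(Gf)-f$, that is, $-(Gf)''+q\,(Gf)=f$. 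Combined with the boundedness estimate \eqref{2.9} and the local twice-differentiability of $Gf$, this places $Gf$ in $\mathcal D(\mathbb R)$ (see \eqref{2.12}) and yields $\mathcal L G=\mathrm{id}$ on $C(\mathbb R)$. Justifying the differentiation under the integral sign uses the decay of $v(t)$ as $t\to-\infty$ and of $u(t)$ as $t\to+\infty$ guaranteed by \thmref{thm2.1}.

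Next I would show that $G$ is a left inverse. Given $y\in\mathcal D(\mathbb R)$, set $f:=\mathcal L y=-y''+qy$; by the definition \eqref{2.12} of $\mathcal D(\mathbb R)$ we have $f\in C(\mathbb R)$. Then both $y$ and $Gf$ are solutions of \eqref{2.8} lying in $C(\mathbb R)$, so by the uniqueness clause~a) of \thmref{thm2.6} they coincide: $y=Gf=G\mathcal L y$. Hence $G\mathcal L=\mathrm{id}$ on $\mathcal D(\mathbb R)$, so $\mathcal L$ is a bijection of $\mathcal D(\mathbb R)$ onto $C(\mathbb R)$ with $\mathcal L^{-1}=G$, and the continuity of $\mathcal L^{-1}$ is precisely $\|\mathcal L^{-1}f\|_{C(\mathbb R)}=\|Gf\|_{C(\mathbb R)}\le c\|f\|_{C(\mathbb R)}$ from \eqref{2.9}.

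The one genuinely delicate point is the left-inverse (injectivity) step, and it is delicate exactly because it rests on the boundedness built into $\mathcal D(\mathbb R)$. The homogeneous equation $z''=qz$ has the nontrivial solutions $u$ and $v$, but by \thmref{thm2.1} each is unbounded (at $-\infty$ and at $+\infty$, respectively), so neither belongs to $C(\mathbb R)$. Consequently the requirement $y\in C(\mathbb R)$ in \eqref{2.12} is what removes the kernel and makes the uniqueness in \thmref{thm2.6} applicable; dropping it would destroy injectivity. The remaining steps --- the differentiation of $Gf$ and the appeal to the a priori estimate --- are routine.
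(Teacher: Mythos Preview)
Your argument is correct: verifying $\mathcal L G=\mathrm{id}$ by direct differentiation of the Green representation and $G\mathcal L=\mathrm{id}$ by appealing to the uniqueness clause of \thmref{thm2.6}, together with the bound \eqref{2.9}, is exactly the standard route, and your remark that the boundedness built into $\mathcal D(\mathbb R)$ is what kills the kernel (since $u$ and $v$ are both unbounded by \thmref{thm2.1}) is the right observation.

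There is, however, nothing to compare against: the paper does not supply its own proof of \thmref{thm2.7}. The result is quoted in the Preliminaries section as a known fact from \cite{5} and is used throughout without argument. Your sketch is therefore not an alternative to the paper's proof but a self-contained justification of a cited result, and as such it is sound.
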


\begin{defn}\label{defn2.8} \cite{2} We say that equation
\eqref{2.8} is separable in $C(\mathbb R)$ if there is a constant
$c\in(0,\infty)$ such that regardless of the choice of $f\in
C(\mathbb R),$ the solution $y\in C(\mathbb R)$ of \eqref{2.8}
satisfies the inequality
\begin{equation}\label{2.15}
\|y''\|_{C(\mathbb R)}+\|qy\|_{C(\mathbb R)}\le c\|f\|_{C(\mathbb
R)}.
\end{equation}
\end{defn}

\begin{remark}\label{rem2.9}
The problem of separating the operator $\mathcal L$ into summands:
$$\|\mathcal Ly\|_{C(\mathbb R)}\le \|y''\|_{C(\mathbb
R)}+\|qy\|_{C(\mathbb R)}\le c\|\mathcal Ly\|_{C(\mathbb
R)},\quad\forall y\in\mathcal D(\mathbb R)$$ was first studied in
\cite{8,9} in the space $L_2(\mathbb R).$
\end{remark}

\begin{thm}\label{thm2.10} \cite[pp.~84--85]{10} Let $\varphi(x)$,
$x\in\mathbb R$ be a non-negative, continuous function, and let
$y(x),$ $x\in\mathbb R$ be a doubly continuously differentiable
function. Then we have the equality
\begin{equation}\label{2.16}
y(x)=y(x-\varphi(x))+\varphi(x)y'(x)-\int_{x-\varphi(x)}^xy''(\xi)(\xi-x+\varphi
(x))d\xi,\qquad x\in\mathbb R.
\end{equation}
\end{thm}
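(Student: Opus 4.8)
The plan is to establish \eqref{2.16} by a single integration by parts applied to the integral on its right-hand side, treating $x$ as a fixed parameter. For fixed $x\in\mathbb{R}$ the number $\varphi(x)$ is simply a constant, and the hypotheses guarantee that the integrand $y''(\xi)(\xi-x+\varphi(x))$ is continuous in $\xi$, so the integral is well defined and the integration by parts is justified.

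First I would set $w(\xi)=\xi-x+\varphi(x)$ and record the two boundary values that drive the computation: $w(x-\varphi(x))=0$ and $w(x)=\varphi(x)$. Writing $I=\int_{x-\varphi(x)}^x y''(\xi)w(\xi)\,d\xi$ and integrating by parts with $w$ as the factor to be differentiated (so that $w'(\xi)\equiv 1$) gives
\begin{equation*}
I=\bigl[w(\xi)y'(\xi)\bigr]_{x-\varphi(x)}^x-\int_{x-\varphi(x)}^x y'(\xi)\,d\xi .
\end{equation*}
The boundary term at the lower limit vanishes because $w(x-\varphi(x))=0$, while the boundary term at the upper limit equals $\varphi(x)y'(x)$ since $w(x)=\varphi(x)$; the remaining integral evaluates to $y(x)-y(x-\varphi(x))$ by the fundamental theorem of calculus. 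Hence $I=\varphi(x)y'(x)-y(x)+y(x-\varphi(x))$, and solving this for $y(x)$ reproduces \eqref{2.16} exactly.

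I do not expect any genuine obstacle: the identity is a pointwise statement for each fixed $x$, so the continuity of $\varphi$ plays no role beyond making the integrand continuous, and the twice continuous differentiability of $y$ supplies precisely the regularity the integration by parts requires. The only thing needing care is the bookkeeping of signs together with the correct orientation of the interval $[x-\varphi(x),x]$, which is guaranteed by $\varphi\ge 0$ and degenerates harmlessly to the trivial identity when $\varphi(x)=0$. Equivalently, one may read \eqref{2.16} as Taylor's formula with integral remainder for $y(x-\varphi(x))$ expanded about the point $x$; the integration-by-parts route is simply the most transparent and self-contained way to verify it.
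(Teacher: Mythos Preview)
Your argument is correct: a single integration by parts with $w(\xi)=\xi-x+\varphi(x)$ yields exactly \eqref{2.16}, and the regularity hypotheses are precisely what is needed. The paper itself does not prove this statement at all---it is quoted in the preliminaries as a known identity from Goursat \cite[pp.~84--85]{10} (Taylor's formula with integral remainder)---so your write-up supplies a self-contained verification where the paper only gives a reference.
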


\section{Auxiliary Assertions}

Some of the statements presented below are interesting in their
own right.

\begin{lem}\label{lem3.1}
We have the inequalities (see \eqref{2.5})
\begin{equation}\label{3.1}
\sup_{x\in\mathbb R} d_1(x)\le 1,\qquad\sup_{x\in\mathbb
R}d_2(x)\le 1.
\end{equation}
\end{lem}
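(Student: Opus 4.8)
The plan is to estimate $d_1(x)$ and $d_2(x)$ directly from their defining equations \eqref{2.5}, using only the lower bound $q\ge 1$ from \eqref{1.2}. I will work with $d_1(x)$; the argument for $d_2(x)$ is identical after replacing the inner integral $\int_{x-t}^x q(\xi)\,d\xi$ by $\int_x^{x+t} q(\xi)\,d\xi$. Fix $x\in\mathbb R$ and write $d=d_1(x)$, so that $\int_0^{\sqrt 2\,d}\int_{x-t}^x q(\xi)\,d\xi\,dt=1$.

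The key step is a one-line monotonicity bound on the left-hand side of the defining equation. Define
$$
\Phi(d)=\int_0^{\sqrt 2\,d}\int_{x-t}^x q(\xi)\,d\xi\,dt,\qquad d\ge 0.
$$
By Lemma~\ref{lem2.2}, $\Phi$ is (strictly) increasing in $d$ and $\Phi(d_1(x))=1$. Since $q\ge 1$ on $\mathbb R$, the inner integral satisfies $\int_{x-t}^x q(\xi)\,d\xi\ge t$ for every $t\ge 0$, hence
$$
\Phi(d)\ \ge\ \int_0^{\sqrt 2\,d} t\,dt\ =\ \frac{(\sqrt 2\,d)^2}{2}\ =\ d^2 .
$$
Therefore $\Phi(1)\ge 1=\Phi(d_1(x))$, and by the monotonicity of $\Phi$ we conclude $d_1(x)\le 1$. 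Taking the supremum over $x\in\mathbb R$ gives $\sup_{x}d_1(x)\le 1$. The same computation with $\int_x^{x+t}q(\xi)\,d\xi\ge t$ yields $\sup_{x}d_2(x)\le 1$.

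There is no real obstacle here: the only input is $q\ge 1$, and the argument is a direct comparison of the defining integral with the same integral for the constant function $q\equiv 1$, for which the solution of \eqref{2.5} is exactly $d=1$. The one point to keep in mind is that Lemma~\ref{lem2.2} must be invoked to guarantee that $\Phi$ is strictly monotone (so that $\Phi(1)\ge\Phi(d_1(x))$ forces $1\ge d_1(x)$), and to rule out degenerate behaviour of the solution; once that is granted the estimate \eqref{3.1} is immediate.
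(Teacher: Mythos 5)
Your proposal is correct and uses exactly the same key estimate as the paper: since $q\ge 1$, the defining integral is at least $\int_0^{\sqrt2 d}t\,dt=d^2$, which forces $d_1(x)\le 1$ (the paper applies this directly at $d=d_1(x)$ via $1=\Phi(d_1(x))\ge d_1(x)^2$, while you route it through the monotonicity of $\Phi$ — a trivially equivalent phrasing). Nothing is missing.
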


\begin{proof}
{} From \eqref{1.2} and \eqref{2.5} it follows that
$$1=\int_0^{\sqrt{2}d_1(x)}\int_{x-t}^x q(\xi)d\xi
dt\ge\int_0^{\sqrt {2}d_1(x)}\int_{x-t}^x 1d\xi \ge d_1^2(x),\qquad
x\in\mathbb R\quad\Rightarrow\quad \eqref{3.1}.
$$
The second inequality in \eqref{3.1} can be checked similarly.
\end{proof}

\begin{lem}\label{lem3.2} We have the inequalities
\begin{equation}\label{3.2}
\|\mathcal L^{-1}\|_{C(\mathbb R)\to C(\mathbb R)}\le 1,
\end{equation}
\begin{equation}\label{3.3}
\left\|\frac{d}{dx}\mathcal L^{-1}\right\|_{C(\mathbb R)\to
C(\mathbb R)}\le\sqrt 2.
\end{equation}
\end{lem}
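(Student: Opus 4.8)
The plan is to work directly with the integral representation of $\mathcal{L}^{-1}=G$ given in \thmref{thm2.6} and \thmref{thm2.7}, together with the pointwise bounds on the FSS $\{u,v\}$ coming from \thmref{thm2.3} and \lemref{lem3.1}. For \eqref{3.2}, I would start from
$$|(\mathcal{L}^{-1}f)(x)|=|(Gf)(x)|\le \|f\|_{C(\mathbb R)}\int_{-\infty}^\infty G(x,t)\,dt,$$
so everything reduces to showing $\int_{-\infty}^\infty G(x,t)\,dt\le 1$ for every $x$. Splitting the integral at $t=x$ and using \eqref{2.11}, this integral equals $v(x)\int_{-\infty}^x u(t)\,dt+u(x)\int_x^\infty v(t)\,dt$. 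The key computational identity is that, because $-z''+qz=0$ for $z\in\{u,v\}$ and $q\ge 1$, one has $\int_{-\infty}^x u(t)\,dt\le \int_{-\infty}^x q(t)u(t)\,dt=-u'(x)$ (using $u'\to 0$ at $+\infty$ from \thmref{thm2.1}), and similarly $\int_x^\infty v(t)\,dt\le v'(x)$. Hence $\int_{-\infty}^\infty G(x,t)\,dt\le -v(x)u'(x)+u(x)v'(x)=1$ by the Wronskian relation in \eqref{2.2}. This gives \eqref{3.2}.

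For \eqref{3.3}, I would differentiate under the integral sign: since $\partial_x G(x,t)=v'(x)u(t)$ for $t<x$ and $u'(x)v(t)$ for $t>x$ (the boundary terms at $t=x$ cancel because $u(x)v(x)=v(x)u(x)$), we get
$$\left|\frac{d}{dx}(\mathcal{L}^{-1}f)(x)\right|\le \|f\|_{C(\mathbb R)}\left(v'(x)\int_{-\infty}^x u(t)\,dt+|u'(x)|\int_x^\infty v(t)\,dt\right).$$
Using the same bounds $\int_{-\infty}^x u(t)\,dt\le -u'(x)$ and $\int_x^\infty v(t)\,dt\le v'(x)$, the right-hand side is at most $\|f\|_{C(\mathbb R)}\bigl(v'(x)|u'(x)|+|u'(x)|v'(x)\bigr)=2|u'(x)|v'(x)\|f\|_{C(\mathbb R)}$. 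Now I need $|u'(x)|v'(x)\le 1/\sqrt2$. From \eqref{2.6} we have $|u'(x)|\le \sqrt2\,u(x)/d_2(x)$ and $v'(x)\le \sqrt2\,v(x)/d_1(x)$, so $|u'(x)|v'(x)\le 2\rho(x)/(d_1(x)d_2(x))$; combining with the upper bound $\rho(x)\le \sqrt2\, d_1(x)d_2(x)/(d_1(x)+d_2(x))$ from \eqref{2.7} yields $|u'(x)|v'(x)\le 2\sqrt2/(d_1(x)+d_2(x))$. This is not yet $\le 1/\sqrt2$ unless $d_1(x)+d_2(x)\ge 4$, which fails by \lemref{lem3.1}, so I expect this crude route to lose a constant.

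The main obstacle is therefore getting the sharp constant $\sqrt2$ in \eqref{3.3} rather than merely a finite bound. I anticipate the fix is to keep one factor exact instead of using \eqref{2.6} twice: write $v'(x)\int_{-\infty}^x u(t)\,dt\le v'(x)\cdot(-u'(x))$ and then use \eqref{2.4}, namely $|u'(x)|/u(x)=(1-\rho'(x))/(2\rho(x))$ and $v'(x)/v(x)=(1+\rho'(x))/(2\rho(x))$, to get $|u'(x)|v'(x)=u(x)v(x)\cdot\frac{1-\rho'(x)^2}{4\rho(x)^2}=\frac{1-\rho'(x)^2}{4\rho(x)}$. Since $|\rho'(x)|<1$ by \eqref{2.3}, this is $\le \frac{1}{4\rho(x)}$, and one bounds $\rho(x)$ from below — using \eqref{2.7} with \lemref{lem3.1}, or directly $2\rho(x)\ge$ something — to conclude $\frac{1-\rho'^2}{4\rho}\le\frac12$, giving $2|u'(x)|v'(x)\le 1<\sqrt2$; if the arithmetic does not quite close I would instead bound $\sqrt{|u'(x)|v'(x)}$ and split the two integrals asymmetrically. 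Either way, once the pointwise inequality $|u'(x)|v'(x)\le 1/\sqrt2$ (or the weaker $\le 1/2$) is established, \eqref{3.3} follows immediately from the displayed estimate.
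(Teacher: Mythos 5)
Your argument for \eqref{3.2} is, up to a bookkeeping error, exactly the paper's: bound $\int_{-\infty}^{\infty}G(x,t)\,dt$ by inserting $q\ge 1$ and using the Wronskian. But note that by \eqref{2.11} the correct split is $\int_{-\infty}^{\infty}G(x,t)\,dt=u(x)\int_{-\infty}^{x}v(t)\,dt+v(x)\int_{x}^{\infty}u(t)\,dt$; you have swapped $u$ and $v$, and as written your integrals $\int_{-\infty}^{x}u(t)\,dt$ and $\int_{x}^{\infty}v(t)\,dt$ actually diverge (since $u\to\infty$ at $-\infty$ and $v\to\infty$ at $+\infty$ by \thmref{thm2.1}). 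After the mechanical correction ($\int_{-\infty}^{x}v\le\int_{-\infty}^{x}qv=v'(x)$, $\int_{x}^{\infty}u\le\int_{x}^{\infty}qu=-u'(x)$, then $u v'-v u'=1$) this part is fine.

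For \eqref{3.3} there is a genuine gap. Your reduction to the pointwise inequality $|u'(x)|v'(x)\le 1/\sqrt2$ cannot be repaired: by \eqref{2.4}, $|u'(x)|v'(x)=\frac{1-\rho'(x)^2}{4\rho(x)}$, and under the standing hypothesis $q\ge1$ the function $\rho$ has \emph{no} positive lower bound — \lemref{lem3.1} together with \eqref{2.7} gives only the \emph{upper} bound $\rho\le 1/\sqrt2$, which points the wrong way. Concretely, for $q\equiv\lambda^2$ one has $u=e^{-\lambda x}$, $v=\frac{1}{2\lambda}e^{\lambda x}$, so $|u'|v'=\lambda/2\to\infty$; the culprit is the step $\int_{-\infty}^{x}v\le v'(x)$, which loses a factor of order $q$ (the integral is of size $v/\sqrt q$ while $v'$ is of size $v\sqrt q$). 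The paper avoids this by pairing $|u'(x)|$ with $v(x)$ rather than with $v'(x)$: writing $\int_{-\infty}^{x}v(t)\,dt=\int_{-\infty}^{x}\frac{v(t)}{v'(t)}v'(t)\,dt$ and using \eqref{2.6} in the form $\frac{v(t)}{v'(t)}\le\sqrt2\,d_1(t)\le\sqrt2$ (by \lemref{lem3.1}), one gets $\int_{-\infty}^{x}v(t)\,dt\le\sqrt2\,v(x)$, and symmetrically $\int_{x}^{\infty}u(t)\,dt\le\sqrt2\,u(x)$; then the derivative of $\int G(x,t)\,dt$ is bounded by $\sqrt2\,(|u'(x)|v(x)+v'(x)u(x))=\sqrt2$ exactly by the Wronskian. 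You would need to replace your key estimate by this one (or an equivalent use of $d_1,d_2\le1$) to close the proof.
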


 \begin{proof}
 In the following relations we only use \thmref{thm2.1}:
\begin{equation*}
 \left.\begin{array}{ll}
 v'(x)=v'(x)-v'(-\infty)=\int_{-\infty}^x
 v''(t)dt=\int_{-\infty}^x q(t)v(t)dt\\ \\
 -u'(x)=u'(\infty)-u'(x)=\int_x^\infty u''(t)dt=\int_x^\infty
 q(t)u(t)dt
 \end{array}\right\}\quad\Rightarrow
 \end{equation*}
 \begin{align*}
 1&=v'(x)u(x)-u'(x)v(x)=u(x)\int_{-\infty}^xq(t)v(t)dt+v(x)\int_x^\infty
 q(t)u(t)dt\\
 &=\int_{-\infty}^\infty q(t)G(x,t)dt\ge\int_{-\infty}^\infty
 G(x,t)dt.\end{align*}
This implies that (see \eqref{2.14})
$$\|\mathcal L^{-1}\|_{C(\mathbb R)\to C(\mathbb
R)}=\sup_{x\in\mathbb R}\int_{-\infty}^\infty G(x,t)dt\le
1\quad\Rightarrow\quad \eqref{3.2}.$$

In the following relations, together with \thmref{thm2.1}, we use
\eqref{2.6}, \eqref{2.14} and \eqref{3.1}:
\begin{align*}
\left\|\frac{d}{dx}\mathcal L^{-1}\right\|_{C(\mathbb R)\to
C(\mathbb R)}&=\sup_{x\in\mathbb
R}\left|\frac{d}{dx}\int_{-\infty}^\infty
G(x,t)dt\right|=\sup_{x\in\mathbb R}\left|u'(x)\int_{-\infty}^x
v(t)dt+v'(x)\int_x^\infty u(t)dt\right|\\
&\le \sup_{x\in\mathbb
R}\bigg[|u'(x)|\int_{-\infty}^x\frac{v(t)}{v'(t)}\cdot
v'(t)dt+v'(x)\int_x^\infty\frac{|u(t)|}{|u'(t)|}\cdot
|u'(t)|dt\bigg]\\
&\le\sqrt 2\sup_{x\in\mathbb
R}\left[|u'(x)|\int_{-\infty}^xd_1(t)v'(t)dt+v'(x)\int_x^\infty
d_2(t)(-u'(t))dt\right]\\
&\le \sqrt 2\sup_{x\in\mathbb
R}\left[|u'(x)|\int_{-\infty}^xv'(t)dt-v'(x)\int_x^\infty
u'(t)dt\right]\\
&=\sqrt 2\sup_{x\in\mathbb R}[v'(x)u(x)-u'(x)v(x)]=\sqrt 2.
\end{align*}
 \end{proof}

 \begin{lem} Under condition \eqref{1.4}, we have the inequalities
 \begin{equation}\label{3.4}
 \frac{1}{\sqrt {2a}}\ \frac{1}{\sqrt {q(x)}}\le d_1(x),
 d_2(x)\le\sqrt{2a}\ \frac{1}{\sqrt {q(x)}},\qquad x\in\mathbb R.
 \end{equation}
 \end{lem}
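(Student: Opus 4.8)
The plan is to derive \eqref{3.4} directly from the defining equations \eqref{2.5} for $d_1(x)$ and $d_2(x)$, using the local oscillation bound \eqref{1.4} to replace $q(\xi)$ on the relevant interval by a constant multiple of $q(x)$. First I would observe that, by \lemref{lem3.1}, we already know $0<d_1(x)\le 1$ and $0<d_2(x)\le 1$ for all $x\in\mathbb R$; moreover $\sqrt2 d_i(x)\le\sqrt2<2$, so the inner integration in \eqref{2.5} runs over an interval of length at most $\sqrt2 d_i(x)\le\sqrt2$, and every point $\xi$ appearing in the double integral lies in $[x-\sqrt2,x+\sqrt2]$. This is slightly wider than $[x-1,x+1]$, so to apply \eqref{1.4} cleanly I would first strengthen \lemref{lem3.1} to note that in fact $\sqrt2 d_i(x)\le 1$: indeed the estimate $1=\int_0^{\sqrt2 d_1(x)}\int_{x-t}^x q(\xi)\,d\xi\,dt\ge d_1^2(x)$ from the proof of \lemref{lem3.1} together with $q\ge1$ gives $d_1(x)\le1$, but running the same computation on $[0,\sqrt2 d_1(x)]$ and using $q\ge1$ more carefully yields $1\ge\int_0^{\sqrt2 d_1(x)} t\,dt=d_1^2(x)$, hence $d_1(x)\le1$ and $\sqrt2 d_1(x)\le\sqrt2$. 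Since this is only marginally larger than $1$, the cleanest route is to keep track of the constants so that \eqref{1.4} is invoked only where it legitimately applies; alternatively one notes $[x-\sqrt2 d_i(x),x]\subseteq[x-\sqrt2,x+\sqrt2]$ and uses a mild extension of \eqref{1.4} (which follows by chaining the hypothesis twice, since $\sqrt2<2$).

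Granting that the oscillation bound is available on the integration interval, the upper bound in \eqref{3.4} follows from the lower estimate on $q$: from \eqref{2.5},
\begin{equation*}
1=\int_0^{\sqrt2 d_1(x)}\int_{x-t}^x q(\xi)\,d\xi\,dt\ge a^{-1}q(x)\int_0^{\sqrt2 d_1(x)} t\,dt=a^{-1}q(x)\,d_1^2(x),
\end{equation*}
which rearranges to $d_1(x)\le\sqrt{a}/\sqrt{q(x)}\le\sqrt{2a}/\sqrt{q(x)}$. Symmetrically, the lower bound in \eqref{3.4} follows from the upper estimate on $q$:
\begin{equation*}
1=\int_0^{\sqrt2 d_1(x)}\int_{x-t}^x q(\xi)\,d\xi\,dt\le a\,q(x)\int_0^{\sqrt2 d_1(x)} t\,dt=a\,q(x)\,d_1^2(x),
\end{equation*}
so $d_1(x)\ge 1/(\sqrt{a}\,\sqrt{q(x)})\ge 1/(\sqrt{2a}\,\sqrt{q(x)})$. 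The same two computations applied to the second equation in \eqref{2.5}, with $[x,x+t]$ in place of $[x-t,x]$, give the identical bounds for $d_2(x)$.

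The main obstacle — really the only delicate point — is the mismatch between the interval $[x-1,x+1]$ on which \eqref{1.4} is assumed and the interval $[x-\sqrt2 d_i(x),\,x+\sqrt2 d_i(x)]$ over which one integrates; since $\sqrt2>1$ one cannot apply \eqref{1.4} verbatim. I expect to resolve this exactly as above: bound $\sqrt2 d_i(x)$ first (it is $\le\sqrt2$, indeed one can afford to be generous here), then either invoke \eqref{1.4} twice to cover $[x-\sqrt2,x+\sqrt2]$ at the cost of replacing $a$ by $a^2$ in the intermediate step — which is harmless, since the final constants in \eqref{3.4} are $\sqrt{2a}$ and we have room to spare — or, more cheaply, note that on the subinterval where $t\le1$ the estimate \eqref{1.4} applies directly and the contribution of $t\in(1,\sqrt2 d_i(x)]$ is controlled separately. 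Once the constants are arranged so that the factor $\sqrt2$ absorbs the slack, the two-line integral computations above close the proof.
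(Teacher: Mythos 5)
You have correctly identified the one delicate point --- the integration variable $\xi$ ranges over $[x-\sqrt2\,d_1(x),x]$, which may exceed $[x-1,x]$ since one only knows $d_1(x)\le1$ --- but neither of your proposed repairs actually closes it. The chaining fix replaces $a$ by $a^2$ on $[x-2,x+2]$, and then your two displayed computations give $d_1(x)\le a/\sqrt{q(x)}$ and $d_1(x)\ge 1/\bigl(a\sqrt{q(x)}\bigr)$. These are \emph{weaker} than \eqref{3.4} whenever $a>2$, because $a\le\sqrt{2a}$ fails there; the claim that the factor $\sqrt2$ in the target constants ``absorbs the slack'' of passing from $\sqrt a$ to $a$ is simply false for large $a$, and nothing in the hypotheses bounds $a$. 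The second fix (``control the contribution of $t\in(1,\sqrt2 d_i(x)]$ separately'') is not carried out, and note that for the \emph{lower} bound on $d_1(x)$ you need an upper estimate of the double integral, so you cannot discard the offending range of $t$ the way you can for the upper bound. The gap is repairable within your direct approach, but by a different device: if $\sqrt2\,d_1(x)\le1$ your two computations apply verbatim with constant $a$ and give even better bounds ($\sqrt a$ in place of $\sqrt{2a}$); if instead $\sqrt2\,d_1(x)>1$, restricting the outer integral to $t\in[0,1]$ yields $1\ge a^{-1}q(x)/2$, i.e.\ $q(x)\le 2a$, whence $\sqrt{2a/q(x)}\ge1\ge d_1(x)$ gives the upper bound, while the lower bound $1/\sqrt{2aq(x)}\le1/\sqrt2<d_1(x)$ holds trivially since $aq(x)\ge1$. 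You should supply this (or an equivalent) case analysis explicitly.

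For comparison, the paper sidesteps the interval mismatch entirely: it introduces the auxiliary equation $F(d)=d\int_{x-d}^{x}q(\xi)\,d\xi=1$ with unique root $\hat d(x)$, shows $\hat d(x)\le1$ so that the first mean value theorem produces a point $\tilde x\in[x-1,x]$ where \eqref{1.4} applies legitimately, giving $1/\sqrt{aq(x)}\le\hat d(x)\le\sqrt{a/q(x)}$, and then sandwiches $d_1(x)/\sqrt2\le\hat d(x)\le\sqrt2\,d_1(x)$ by monotonicity of $F$; this comparison is exactly where the factor $\sqrt2$ in \eqref{3.4} comes from. Your route, once patched as above, is arguably more elementary (no auxiliary equation, no mean value theorem), but as written the constant bookkeeping does not work out.
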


 \begin{proof}
 Estimates \eqref{3.4} for $d_1(x)$ and $d_2(x)$ are proved in the
 same way; therefore, here we only consider $d_1(x).$ For a given
 $x\in\mathbb R,$ consider the equation in $d\ge0:$
 \begin{equation}\label{3.5}
 F(d)=1,\qquad F(d)=d\cdot\int_{x-d}^x q(\xi)d\xi.
 \end{equation}
 Clearly, on $[0,\infty)$ the function $F(d)$ is monotone
 increasing, and $F(d)\ge d^2.$ Since $F(0)=0,$ $F(\infty)=\infty,$
 we conclude that \eqref{3.5} has a unique positive solution.

 Denote this solution by $\hat d(x).$ Clearly, $\hat d(x)\le 1$
 for $x\in\mathbb R$ because $$1=\hat d(x)\cdot\int_{x-\hat d(x)}^x
 q(\xi)d\xi\ge\hat d(x)\int_{x-\hat d(x)}^x 1 dt= \hat d^2(x).
 $$
 In addition, from the first mean value theorem and \eqref{1.4},
 it follows that
 \begin{align}
 1&=\hat d(x)\int_{x-\hat d(x)}^x q(t)dt=q(\tilde x)\hat
 d^2(x),\quad \tilde x\in [x-1,x]\quad\Rightarrow\nonumber\\
  \hat d(x)&=\frac{1}{\sqrt{q(\tilde x)}}
  =\sqrt{\frac{q(x)}{q(\tilde x)}}\ \frac{1}{\sqrt{q(x)}}\le
 \sqrt{\frac{a}{q(x)}},\quad x\in\mathbb R,\label{3.6}
 \end{align}
 \begin{equation}\label{3.7}
 \hat d(x)=\frac{1}{\sqrt{q(\tilde x)}}=\sqrt\frac{q(x)}{q(\tilde
 x)}\ \frac{1}{\sqrt{q(x)}}\ge \frac{1}{\sqrt{aq(x)}},\quad
 x\in\mathbb R.
 \end{equation}

 Now, from \eqref{2.5}, properties of $F(d),$ $d\ge0,$ and
 \eqref{3.6} and \eqref{3.7}, it follows that
\begin{gather*} 1=\int_0^{\sqrt 2 d_1(x)}\int_{x-t}^x q(\xi)d\xi dt\le \sqrt
2 d_1(x)\int_{x-\sqrt 2 d_1(x)}^x q(\xi)d\xi=F(\sqrt 2
d_1(x))\quad\Rightarrow\\
\sqrt 2 d_1(x)\ge \hat d(x)
\ge\frac{1}{\sqrt{aq(x)}}\quad\Rightarrow\quad
d_1(x)\ge\frac{1}{\sqrt{2aq(x)}},
\end{gather*}
\begin{align*}
1&=\int_0^{\sqrt 2 d_1(x)}\int_{x-t}^x
q(\xi)d\xi\ge\int_{\frac{d_1(x)}{\sqrt 2}}^{\sqrt 2
d_1(x)}\int_{x-t}^x q(\xi)d\xi dt\ge \frac{d_1(x)}{\sqrt
2}\int_{x-\frac{d_1(x)}{\sqrt 2}}^x q(\xi)d\xi\\
&=F\left(\frac{d_1(x)}{\sqrt
2}\right)\quad\Rightarrow\quad\frac{d_1(x)}{\sqrt 2}\le\hat
d(x)\le \frac{a}{\sqrt{q(x)}}\quad\Rightarrow\quad
d_1(x)\le\sqrt{\frac{2a}{q(x)}},\quad x\in\mathbb
R\quad\Rightarrow\quad\eqref{3.5}.
\end{align*}
 \end{proof}

 \begin{cor}\label{cor3.4} For $x\in\mathbb R$ we have the
 inequalities
 \begin{equation}\label{3.8}
 \frac{v(x)}{v(x-d_1(x))}\le\exp\big(2\sqrt 2 a^{3/2}\big);\qquad
 \frac{u(x)}{u(x+d_2(x))}\le \exp\big(2\sqrt 2a^{3/2}\big).
 \end{equation}
 \end{cor}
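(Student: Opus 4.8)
The plan is to estimate the ratio $v(x)/v(x-d_1(x))$ by integrating the logarithmic derivative of $v$ over the interval $[x-d_1(x),x]$, and then to bound that logarithmic derivative using the results already available. Concretely, since $v>0$ and $v'>0$ on $\mathbb R$ (Theorem~\ref{thm2.1}), we have
\[
\ln\frac{v(x)}{v(x-d_1(x))}=\int_{x-d_1(x)}^{x}\frac{v'(t)}{v(t)}\,dt .
\]
The first task is to bound $v'(t)/v(t)$ pointwise. By \eqref{2.6} we have $v'(t)/v(t)\le \sqrt2/d_1(t)$, and by \eqref{3.4} we have $d_1(t)\ge (2aq(t))^{-1/2}$, so $v'(t)/v(t)\le \sqrt2\cdot\sqrt{2aq(t)}=2\sqrt{a\,q(t)}$.

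The second task is to control $q(t)$ for $t$ in the integration window in terms of $q(x)$. Since $d_1(x)\le 1$ by \eqref{3.1}, the whole interval $[x-d_1(x),x]$ is contained in $[x-1,x+1]$, so \eqref{1.4} gives $q(t)\le a\,q(x)$ there. Hence $v'(t)/v(t)\le 2\sqrt{a}\cdot\sqrt{a\,q(x)}=2a\sqrt{q(x)}$ on that interval. Integrating over an interval of length $d_1(x)$ and using once more $d_1(x)\le (2a/q(x))^{1/2}$ from \eqref{3.4} yields
\[
\ln\frac{v(x)}{v(x-d_1(x))}\le 2a\sqrt{q(x)}\cdot d_1(x)\le 2a\sqrt{q(x)}\cdot\sqrt{\frac{2a}{q(x)}}=2\sqrt2\,a^{3/2},
\]
which gives the first inequality in \eqref{3.8} after exponentiating. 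The estimate for $u(x)/u(x+d_2(x))$ is entirely symmetric: one writes $\ln\bigl(u(x)/u(x+d_2(x))\bigr)=\int_x^{x+d_2(x)}\bigl(-u'(t)/u(t)\bigr)\,dt$, bounds $|u'(t)|/u(t)\le\sqrt2/d_2(t)\le 2\sqrt{a\,q(t)}$ via \eqref{2.6} and \eqref{3.4}, uses $d_2(x)\le 1$ together with \eqref{1.4} to get $q(t)\le a\,q(x)$ on $[x,x+d_2(x)]\subset[x-1,x+1]$, and finishes with $d_2(x)\le(2a/q(x))^{1/2}$.

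There is no real obstacle here; the only point requiring a little care is the bookkeeping of the constant $a$: one factor of $\sqrt a$ comes from the lower bound on $d_1$ inside $v'/v$, a second factor of $\sqrt a$ comes from comparing $q(t)$ with $q(x)$ on the window via \eqref{1.4}, and a further $\sqrt a$ enters through the upper bound $d_1(x)\le(2a/q(x))^{1/2}$, so the three half-powers combine to the stated exponent $2\sqrt2\,a^{3/2}$. One must also make sure that the containment $[x-d_1(x),x]\subset[x-1,x+1]$ (respectively $[x,x+d_2(x)]\subset[x-1,x+1]$) is invoked so that hypothesis \eqref{1.4} applies, which is exactly why Lemma~\ref{lem3.1} is needed as an input.
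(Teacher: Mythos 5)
Your proof is correct and follows essentially the same route as the paper: both integrate $v'/v$ over $[x-d_1(x),x]$, bound it via \eqref{2.6} and the lower bound in \eqref{3.4}, invoke \eqref{3.1} and \eqref{1.4} to compare $q(t)$ with $q(x)$ on the window, and finish with the upper bound $d_1(x)\le\sqrt{2a/q(x)}$. The bookkeeping of the three factors of $\sqrt a$ matches the paper exactly.
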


 \begin{proof}
 Below we consecutively use \eqref{2.6}, \eqref{3.4}, \eqref{3.1},
 \eqref{1.4} and once again \eqref{3.4}:
 \begin{align*}
 \ln\frac{v(x)}{v(x-d_1(x))}&=\int_{x-d_1(x)}^x\frac{v'(\xi)}{v(\xi)}d\xi\le
 \sqrt 2\int_{x-d_1(x)}^x\frac{d\xi}{d_1(\xi)}\\
 &\le 2\sqrt a\int_{x-d_1(x)}^x \sqrt{ q(\xi)}d\xi=2\sqrt
 a\int_{x-d_1(x)}^x\sqrt{\frac{q(\xi)}{q(x)}}\cdot\sqrt{q(x)}d\xi\\
 &\le 2a\sqrt{q(x)} d_1(x)\le 2\sqrt 2
 a^{3/2}\quad\Rightarrow\quad\eqref{3.8}.
 \end{align*}
 \end{proof}

 \begin{thm}\label{thm3.5}
 Let $r\in C^{\loc}(\mathbb R).$ Then we have the estimates
 \begin{equation}\label{3.9}
 \frac{1}{2\sqrt 2a}\exp\big(-2\sqrt 2 a^{3/2}\big)m_0(r,q)\le
 \|r\mathcal L^{-1}\|_{C(\mathbb R)\to C(\mathbb R)}\le 4a
 m_0(r,q),\end{equation}
 \begin{equation}\label{3.10}
  \|r\frac{d}{dx}\mathcal L^{-1}\|_{C(\mathbb R)\to C(\mathbb R)}\le
  8a^{3/2}m_1(r,q).\end{equation}
Here
\begin{equation}\label{3.11}
m_0(r,q)=\sup_{x\in\mathbb R}\frac{|r(x)|}{q(x)};\qquad
m_1(r,q)=\sup_{x\in\mathbb R}\frac{r(x)}{\sqrt{q(x)}}.
\end{equation}
In particular, equation \eqref{2.8} is separable in $C(\mathbb R)$, and we
have the inequalities
\begin{equation}\label{3.12}
\frac{1}{2\sqrt 2a}\exp\big(-2\sqrt 2a^{3/2}\big)\le\|q\mathcal
L^{-1}\big)\le\|_{C(\mathbb R)\to C(\mathbb R)}\le 4a,
\end{equation}
\begin{equation}\label{3.13}
\left\|\frac{d^2}{dx^2}\mathcal L^{-1}\right\|_{C(\mathbb R)\to
C(\mathbb R)}\le 4a+1.
\end{equation}
\end{thm}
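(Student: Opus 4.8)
The plan is to reduce the operator estimates to pointwise bounds on the Green function $G$ of \eqref{2.8}, via $\mathcal L^{-1}=G$ (\thmref{thm2.6}). Since $G(x,t)\ge 0$, for any continuous weight $r$ one has $\|r\,\mathcal L^{-1}\|_{C(\mathbb R)\to C(\mathbb R)}=\sup_{x}|r(x)|\int_{-\infty}^{\infty}G(x,t)\,dt$, and differentiating \eqref{2.10} (the boundary terms cancel because $G$ is continuous across the diagonal) gives
\[
\Big\|r\,\tfrac{d}{dx}\mathcal L^{-1}\Big\|_{C(\mathbb R)\to C(\mathbb R)}=\sup_{x}|r(x)|\Big[|u'(x)|\!\int_{-\infty}^{x}\!v(t)\,dt+v'(x)\!\int_{x}^{\infty}\!u(t)\,dt\Big].
\]
Pulling out of the suprema the factors that define $m_0(r,q)$ and $m_1(r,q)$ (see \eqref{3.11}), the upper bounds in \eqref{3.9}--\eqref{3.10} follow once one proves, for all $x$, the single pointwise estimate
\[
q(x)\,u(x)\!\int_{-\infty}^{x}\!v(t)\,dt\le 2a
\]
together with its mirror image for $v(x)\int_{x}^{\infty}u(t)\,dt$ obtained by $x\mapsto-x$ (which swaps $u\leftrightarrow v$ and $d_1\leftrightarrow d_2$): adding the two gives $q(x)\int_{-\infty}^\infty G(x,t)\,dt\le 4a$, and since \thmref{thm2.3} and \eqref{3.4} give $\tfrac{|u'(x)|}{u(x)}\le\tfrac{\sqrt2}{d_2(x)}\le 2\sqrt a\,\sqrt{q(x)}$, one obtains $\sqrt{q(x)}\,|u'(x)|\int_{-\infty}^{x}v\le 2\sqrt a\cdot q(x)u(x)\int_{-\infty}^x v\le 4a^{3/2}$, and similarly for the other term.

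To prove $q(x)u(x)\int_{-\infty}^{x}v\le 2a$ I would split the integral at $x-1$. On $[x-1,x]$, write $v=v''/q$ and use \eqref{1.4} (so $q(t)\ge a^{-1}q(x)$ there):
\[
u(x)\!\int_{x-1}^{x}\!v(t)\,dt\le\frac{a}{q(x)}\,u(x)\bigl(v'(x)-v'(x-1)\bigr)\le\frac{a}{q(x)}\,u(x)v'(x)\le\frac{a}{q(x)},
\]
since $u(x)v'(x)=1+u'(x)v(x)<1$ by \eqref{2.2}; so the ``near'' part contributes at most $a$. On $(-\infty,x-1)$, since $q\ge1$ we have $v=v''/q\le v''$, hence $u(x)\int_{-\infty}^{x-1}v\le u(x)v'(x-1)\le u(x)/u(x-1)$ (again by \eqref{2.2}), and
\[
\frac{u(x-1)}{u(x)}=\exp\!\Big(\int_{x-1}^{x}\frac{|u'(t)|}{u(t)}\,dt\Big)\ge\exp\!\Big(\int_{x-1}^{x}\frac{dt}{\sqrt2\,d_2(t)}\Big)\ge\exp\!\Big(\frac{\sqrt{q(x)}}{2a}\Big),
\]
using \thmref{thm2.3} for the first inequality and \eqref{1.4}, \eqref{3.4} (which give $d_2(t)\le\sqrt2\,a/\sqrt{q(x)}$ on $[x-1,x]$) for the last. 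Thus $q(x)u(x)\int_{-\infty}^{x-1}v\le q(x)\exp(-\sqrt{q(x)}/(2a))$, which is bounded for $q(x)\ge1$ by a constant depending only on $a$: the exponential decay of $u$ on the intrinsic scale $d_2(x)\asymp a^{1/2}q(x)^{-1/2}$ is exactly what crushes the growing factor $q(x)$. Adding the two parts and tidying the constants (by refining the tail bound, or by iterating \corref{cor3.4} across consecutive $d_1$-steps and summing a geometric series) gives the claimed $2a$.

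For the lower bound in \eqref{3.9}, fix $x$ and keep in $\int_{-\infty}^\infty G(x,t)\,dt$ only the contributions of $[x-d_1(x),x]$ and $[x,x+d_2(x)]$; by the monotonicity of $u,v$ (\eqref{2.2}) and \corref{cor3.4},
\[
\int_{-\infty}^{\infty}\!G(x,t)\,dt\ge d_1(x)u(x)v(x-d_1(x))+d_2(x)v(x)u(x+d_2(x))\ge e^{-2\sqrt2\,a^{3/2}}\rho(x)\bigl(d_1(x)+d_2(x)\bigr).
\]
By \eqref{2.7}, $\rho(x)(d_1(x)+d_2(x))\ge\tfrac1{\sqrt2}d_1(x)d_2(x)$, and by \eqref{3.4}, $d_1(x)d_2(x)\ge(2a\,q(x))^{-1}$, so $\int_{-\infty}^\infty G(x,t)\,dt\ge\frac{e^{-2\sqrt2\,a^{3/2}}}{2\sqrt2\,a\,q(x)}$; multiplying by $|r(x)|$ and taking the supremum gives the left inequality of \eqref{3.9}. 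The ``in particular'' statements then follow at once: taking $r=q$ in \eqref{3.9} gives $m_0(q,q)=1$ and hence \eqref{3.12}, which in view of \eqref{2.12}--\eqref{2.13} is precisely separability of \eqref{2.8} (with $\|y''\|=\|qy-f\|\le\|qy\|+\|f\|\le(4a+1)\|f\|$); and since $y=\mathcal L^{-1}f$ solves $-y''+qy=f$, the operator identity $\tfrac{d^2}{dx^2}\mathcal L^{-1}=q\,\mathcal L^{-1}-\id$ holds, whence \eqref{3.13} from \eqref{3.12} and $\|\id\|_{C(\mathbb R)\to C(\mathbb R)}=1$.

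The step I expect to be the main obstacle is the tail estimate for $q(x)u(x)\int_{-\infty}^{x-1}v$ in the second paragraph: far from $x$ the weight $q(t)$ need bear no relation to $q(x)$, so nothing can be gained there from the size of $q$, and one must rely entirely on the exponential decay of $u$ to the right and of $v$ to the left, quantified through \thmref{thm2.3} and \eqref{3.4}. Making this decay precise, and then arranging the numerical constants so that they aggregate to exactly $4a$ and $8a^{3/2}$, is where the real work lies, but it uses no tools beyond those collected in \S2--\S3.
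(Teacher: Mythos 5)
Your lower bound in \eqref{3.9} and your derivation of \eqref{3.12}--\eqref{3.13} coincide with the paper's argument (restrict the Green integral to $[x-d_1(x),x+d_2(x)]$, use \corref{cor3.4}, \eqref{2.7}, \eqref{3.4}; then take $r=q$ and use $\frac{d^2}{dx^2}\mathcal L^{-1}=q\mathcal L^{-1}-\id$). The gap is in the upper bounds. You reduce them to the pointwise claim $q(x)u(x)\int_{-\infty}^{x}v(t)\,dt\le 2a$, split at $x-1$, and bound the tail by $q(x)u(x)\int_{-\infty}^{x-1}v\,dt\le q(x)\exp\bigl(-\sqrt{q(x)}/(2a)\bigr)$. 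But $\sup_{s\ge 1}s\,e^{-\sqrt{s}/(2a)}=16a^{2}e^{-2}$ (attained at $s=16a^{2}$), so the tail contribution is of order $a^{2}$, not $a$; already for $a=1$ it exceeds $2$, so the claimed $2a$ --- and hence the constants $4a$ in \eqref{3.9} and $8a^{3/2}$ in \eqref{3.10} --- does not follow from what you wrote. The ``tidying of constants'' you defer to the end is precisely the unresolved core of the proof. The structural defect is that you let the large factor $q(x)$ multiply the tail and then try to beat it with decay of $u$ measured on the intrinsic scale $d_2\asymp q^{-1/2}$; that trade inevitably costs a power of $a$.

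The paper's proof avoids this by first proving \lemref{lem3.6}: since $q\ge 1$, comparing $u$ with $z(t)=e^{-t}$ via the Wronskian $u'z-z'u$ gives $u(t)/u(x)\le e^{x-t}$ for $t\ge x$, and summing a geometric series over unit intervals yields $\int_{x}^{\infty}u\,dt\le 4\int_{x}^{x+1}u\,dt$ (similarly for $v$), i.e.\ $\int_{-\infty}^{\infty}G(x,t)\,dt\le 4\int_{x-1}^{x+1}G(x,t)\,dt$ \emph{before} any weight enters. Only then is the weight introduced: on $[x-1,x+1]$ one writes $G(x,t)=\frac{q(x)}{q(t)}\cdot\frac{q(t)G(x,t)}{q(x)}$, applies \eqref{1.4}, and re-extends to the exact identity $\int_{-\infty}^{\infty}q(t)G(x,t)\,dt=v'(x)u(x)-u'(x)v(x)=1$; this is where the constant $4a$ comes from, and the analogous computation with $\frac{d}{dx}G$, combined with \eqref{2.4}, \eqref{2.7} and \eqref{3.4}, yields $8a^{3/2}$. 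To repair your argument you should either prove this lemma or reorganize the estimate so that $q(x)$ never multiplies the tail integral on its own.
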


\begin{proof}
We need the following useful assertion.

\begin{lem}\label{lem3.6}
For $x\in\mathbb R$ we have the inequality
\begin{equation}\label{3.14}
\int_{\infty}^\infty G(x,t)dt\le 4\int_{x-1}^{x+1}G(x;t)dt.
\end{equation}
\end{lem}

\begin{proof}
Let us check the relations
\begin{equation}\label{3.15}
\int_{-\infty}^x  v(t)dt\le 4\int_{x-1}^x v(t)dt,\qquad \int_x^\infty u(t)dt\le 4\int_x^{x+1}u(t)dt.
\end{equation}
These inequalities are proved in the same way; therefore,  below we only consider the second one. Denote
$$z(t)=e^{-t},\quad t\in\mathbb R.$$

The following relations are deduced from \thmref{thm2.1}:
\begin{align}
&\qquad\qquad\qquad\qquad u''(\xi)=q(\xi)u(\xi),\qquad z''(\xi)=z(\xi),\qquad \xi\in\mathbb R \nonumber\\
&\Rightarrow\quad[u'(\xi)z(\xi)-z'(\xi)u(\xi)]'=u''(\xi)z(\xi)-z''(\xi)u(\xi)=(q(\xi)-1)u(\xi)z(\xi)\ge0\nonumber\\
&\Rightarrow\quad-[u'(t)z(t)-z'(t)u(t)]=\int_t^\infty[u'(\xi)z(\xi)-z'(\xi)u(\xi)]'d\xi\ge0,\quad t\in\mathbb R\nonumber\\
 &\Rightarrow\quad u'(t)z(t)-z'(t)u(t)\le 0,\quad t\in\mathbb R\quad\Rightarrow\quad u'(t)\le -u(t),  \quad t\in\mathbb R\nonumber\\
 &\Rightarrow\quad \qquad\qquad\qquad\qquad\frac{u(t)}{u(x)}\le e^{x-t}\qquad\text{as}\qquad t\ge x,\quad x\in\mathbb R.\label{3.16}
\end{align}

Let $x\in \mathbb R,$ $x_n=x+n,$ $n=1,2,\dots\, .$ Below we use \thmref{thm2.1} and \eqref{3.16}
\begin{align*}
\int_x^\infty u(t)dt&=\int_x^{x_1}u(t)dt+\sum_{n=1}^\infty\int_{x_n}^{x_{n+1}}u(t)dt\\
&=\int_x^{x_1}u(t)dt\left[1+\sum_{k=1}^\infty\left(\int_{x_n}^{x_{n+1}}u(t)dt\right)\left(\int_x^{x_1}u(t) dt\right)^{-1}\right]\\
&\le\int_{x_1}^x u(t)dt\cdot\left[1+\sum_{n=1}^\infty\frac{u(x_n)}{u(x_1)}\right]\le\int_x^{x_1}u(t)dt\cdot\left[1+\sum_{n=1}
^\infty e^{-(n-1)}\right]\\
&=\int_x^{x_1}u(t)dt\left[1+\frac{1}{1-e^{-1}}\right]\le 4\int_x^{x_1}u(t)dt\quad\Rightarrow\quad\eqref{3.15}.
\end{align*}

Let us now go to \eqref{3.12}. Below we use \thmref{thm2.1}, \eqref{2.5}, \eqref{2.14}, \eqref{2.7}, \eqref{3.8} and \eqref{3.4}:
\begin{align*}
\|r\mathcal L^{-1}\|_{C(\mathbb R)\to C(\mathbb R)}&=\sup_{x\in\mathbb R}|r(x)|\int_{-\infty}^\infty G(x,t)dt\ge\sup_{x\in\mathbb R}|r(x)|\int_{x-d_1(x)}^{x+d_2(x)}G(,t)dt\\
&=\sup_{x\in\mathbb R}|r(x)|\left[u(x)\int_{x-d_1(x)}^x v(t)dt+v(x)\int_x^{x+d_2(x)}u(t)dt\right]\\
&\ge\sup_{x\in\mathbb R} |r(x)|\left[u(x)v(x-d_1(x))d_1(x)+v(x)u(x+d_2(x))d_2(x)\right]\\
&=\sup_{x\in\mathbb R}|r(x)|\rho(x)\left[\frac{v(x-d_1(x))}{v(x)}d_1(x)+\frac{u(x+d_2(x))}{u(x)}d_2(x)\right]\\
&\ge exp\big(-2\sqrt 2 a^{3/2}\big)\sup_{x\in\mathbb R}|r(x)|\rho(x)(d_1(x)+d_2(x))\\
&\ge \frac{1}{\sqrt 2}\exp\big(-2\sqrt 2 a^{3/2}\big)\sup_{x\in\mathbb R}|r(x)|d_1(x)d_2(x)\\
&\ge\frac{1}{2\sqrt 2 a}\exp\big(-2\sqrt 2 a^{3/2}\big)\sup_{x\in\mathbb R}\frac{|r(x)|}{q(x)}=\frac{\exp\big(-2\sqrt 2a^{3/2}\big)}{2\sqrt 2 a}m_0(r,q).
\end{align*}

Below, in the proof of the upper estimate in \eqref{3.9}, we use \thmref{thm2.1}, \eqref{2.14}, \eqref{3.15} and \eqref{1.4}:
\begin{align*}
\|r\mathcal L^{-1}\|_{C(\mathbb R)\to C(\mathbb R)}&=\sup_{x\in\mathbb R}|r(x)|\int_{-\infty}^\infty G(x,t)dt\\ &\le 4\sup_{x\in\mathbb R}\left(|r(x)|\int_{x-1}^{x+1}G(x,t)dt\right)
 =4\sup_{x\in\mathbb R}\frac{|r(x)|}{q(x)}\int_{x-1}^{x+1}\frac{q(x)}{q(t)}\cdot q(t)G(x,t)dt\\
 &\le 4a\sup_{x\in\mathbb R}\frac{|r(x)|}{q(x)}\int_{x-1}^{x+1}q(t)G(x,t)dt
 \le 4a\sup_{x\in\mathbb R}\frac{|r(x)|}{q(x)}\int_{-\infty}^\infty q(t)G(x,t)dt\\
 &=4a\sup_{x\in\mathbb R}\left[u(x)\int_{-\infty}^x v''(t)dt+v(x)\int_x^\infty u''(t)dt\right]\\
&=4a\sup_{x\in\mathbb R}\frac{|r(x)|}{q(x)}(v'(x)u(x) -u'(x)v(x))=4am_0(r,q).
\end{align*}

To prove \eqref{3.11}, we consecutively use \thmref{thm2.1}, \eqref{2.14}, \eqref{3.15}, \eqref{1.4}, \eqref{2.3}, \eqref{2.4}, \eqref{2.7} and \eqref{3.4}:
\begin{align*}
\left\|r\frac{d}{dx}\mathcal L^{-1}\right\|_{C(\mathbb R)\to C(\mathbb R)}&=\sup_{x\in\mathbb R}|r(x)|\left|\frac{d}{dx}\int_{-\infty}^\infty G(x,t)dt\right|\\
&\le 4\sup_{x\in\mathbb R}|r(x)|\left[|u'(x)|\int_{x-1}^x v(t)dt+v'(x)\int_x^{x+1}u(t)dt\right]\\
&=4\sup_{x\in\mathbb R}\frac{|r(x)|}{q(x)}\left[|u'(x)|\int_{x-1}^xv''(t)\frac{q(x)}{q(t)}q(t)
dt+v'(x)\int_x^{x+1}u''(t)\frac{q(x)}{q(t)}q(t)dt\right]\\
&\le 4a \sup_{x\in\mathbb R}\frac{|r(x)|}{q(x)}\left[|u'(x)|\int_{-\infty}^xv''(t)dt+v'(x)\int_x^{x+1}u''(t)dt\right]\\
&\le 8a\sup_{x\in\mathbb R}\left(\frac{|r(x)|}{q(x)}\cdot\frac{v'(x)}{v(x)}\cdot\frac{|u'(x)|}{u(x)}\cdot\rho(x)\right)=8a\sup_{x\in
\mathbb R}\left(\frac{|r(x)|}{q(x)}\cdot \frac{1-\rho{'}^{2}(x)}{4\rho(x)}\right)\\
&\le 2a\sup_{x\in\mathbb R}\left(\frac{|r(x)|}{q(x)}\frac{1}{\rho(x)}\right)\le 2\sqrt 2a\sup_{x\in\mathbb R}\frac{|r(x)|}{q(x)}\left(\frac{1}{d_1(x)}+\frac{1}{d_2(x)}\right)\\
&\le 8a ^{3/2}\sup_{x\in\mathbb R}\frac{|r(x)|}{\sqrt{q(x)}}=8a^{3/2}m_1(r,q).
\end{align*}
The proof of \eqref{3.12} is obvious, and \eqref{3.13} follows from \eqref{3.12}, \eqref{2.8} and the triangle inequality.
\end{proof}

Consider the system of equations
\begin{equation}
 \left.\begin{array}{ll}
z_1(x)=f_1(x)+(B_{11}z_1)(x)+(B_{22}z_2)(x)\\ \\
 z_2(x)=f_2(x)+(B_{21}z_1)(x)+(B_{22}z_2)(x)
 \end{array}\right\},\qquad x\in\mathbb R,\label{3.17}
 \end{equation}
 where $f_k\in C(\mathbb R),$ $k=1,2,$ $B_{ij}:C(\mathbb R)\to C(\mathbb R),$ $i,j=1,2$ are linear operators.

 \begin{lem}\label{lem3.7} Suppose that we have the inequality
 \begin{equation}\label{3.18}
 \|B_{ij}\|_{C(\mathbb R)\to C(\mathbb R)}\le\frac{1}{4},\qquad i,j=1,2.
 \end{equation}
 Then the system \eqref{3.17} has a unique solution $\{z_1,z_2\}$ such that $z_k\in C(\mathbb R),$ $k=1,2,$ and
 \begin{equation}\label{3.19}
 \|z_1\|_{C(\mathbb R)}+\|z_2\|_{C(\mathbb R)}\le 2\big(\|f_1\|_{C(\mathbb R)}+\|f_2\|_{C(\mathbb R)}\big).
 \end{equation}
 \end{lem}

 \begin{proof} Let us write down \eqref{3.12} in vector form. Set
 \begin{equation}\label{3.20}
 z(x):=\begin{cases} z_1(x)\\ z_2(x)\end{cases};\qquad f(x):=\begin{cases} f_1(x)\\ f_2(x)\end{cases};\qquad B:=\begin{pmatrix} B_{11} & B_{12}\\ B_{21} &B_{22}\end{pmatrix}.
 \end{equation}
Then the system \eqref{3.17} becomes
\begin{equation}\label{3.21}
z(x)=f(x)+(Bz)(x),\qquad x\in\mathbb R.
\end{equation}
Denote by $C_2(\mathbb R)$ the vector space of vector functions $z(x), $ $x\in\mathbb R$ with continuous congruents $z_k(x),$ $x\in\mathbb R,$ $k=1,2$ (see \eqref{3.20}), equipped with the norm
\begin{equation}\label{3.22}
\|z\|_{C_2(\mathbb R)}=\|z_1\|_{C(\mathbb R)}+\|z_2\|_{C(\mathbb R)}.
\end{equation}

Let us estimate the norm of the operator $B:C_2(\mathbb R)\to C_2(\mathbb R):$
\begin{align*}
\|Bz\|_{C_2(\mathbb R)}&=\|B_{11}z_1+B_{12}z_2\|_{C(\mathbb R)}+\|B_{21}z_1+B_{22}z_2\|_{C(\mathbb R)}\\
&\le \big(\|B_{11}z_1\|_{C(\mathbb R)}+\|B_{12}z_2\|_{C(\mathbb R)}\big)+ \big(\|B_{21}z_1\|_{C(\mathbb R)}+  \|B_{22}z_2\|_{C(\mathbb R)}\big)\\
&\le \frac{1}{2}\big (\|z_1\|_{C(\mathbb R)}+\|z_2\|_{C(\mathbb R)}\big)=\frac{1}{2}\|z\|_{C_2(\mathbb R)}.
\end{align*}

Therefore, the operator $B:C_2(\mathbb R)\to C_2(\mathbb R)$ is a compressing operator, and the lemma is proved.
 \end{proof}

\section {Proof of the Main Result}

Below we prove \thmref{thm1.1}. Let us introduce an operator $A:C(\mathbb R)\to C(\mathbb R)$ by the formula
\begin{equation}\label{4.1}
(Af)(x)\doe q(x)\int_{x-\varphi(x)}^x f(\xi)(\xi-x+\varphi(x))d\xi,\qquad x\in\mathbb R,\quad f\in C(\mathbb R).\end{equation}

\begin{lem}\label{lem4.1}
We have the inequalities:
\begin{equation}\label{4.2}
\|A\|_{C(\mathbb R)\to C(\mathbb R)}\le\frac{1}{36a},
\end{equation}
\begin{equation}\label{4.3}
\|(E-A)^{-1}\|_{C(\mathbb R)\to C(\mathbb R)}\le\frac{36}{35}.
\end{equation}
\end{lem}

\begin{proof}
Let $\in C(\mathbb R).$ Then we have the relations
\begin{align*}
\|Af\|_{C(\mathbb R)}&=\sup_{x\in\mathbb R}q(x)\left|\int_{x-\varphi(x)}^xf(\xi)(\xi-x+\varphi(x))d\xi\right|\\
&\sup_{x\in\mathbb R}\left[q(x)\int_{x-\varphi(x)}^x(\xi-x+\varphi(x))d\xi\right]\|f\|_{C(\mathbb R)}\le\sup_{x\in\mathbb R}\left[\frac{(q(x)\varphi(x))^2}{q(x)}\right]\cdot \|f\|_{C(\mathbb R)}\\
&\le \sup_{x\in\mathbb R}(q(x)\varphi(x))^2\cdot\|f\|_{C(\mathbb R)}\le \frac{1}{36a}\|f\|_{C(\mathbb R)}\quad\Rightarrow\quad\eqref{4.2}.
\end{align*}
Inequality \eqref{4.3} follows from \eqref{4.2} and the expansion of the operator $(E-A)^{-1}$ in powers of the operator $A.$\end{proof}

Let us introduce some more notation:
\begin{equation}\label{4.4}
f(x),\quad z_1(x),\quad z_2(x) \quad   \text{--\quad are functions from}\quad C(\mathbb R),
\end{equation}
\begin{equation}\label{4.5}
g(x)=[(E-A)^{-1}f](x),\qquad x\in\mathbb R,
\end{equation}
\begin{equation}\label{4.6}
(F_1z_1)(x)=\sum_{n=1}^\infty(A^n(qz_1))(x),\qquad x\in\mathbb R,
\end{equation}
\begin{equation}\label{4.7}
(F_2z_2)(x)=[(E-A)^{-1}(q\varphi z_2)](x),\qquad x\in\mathbb R.
\end{equation}

\begin{lem}\label{lem4.2}
We have the relations
\begin{equation}\label{4.8}
g(x)\in C(\mathbb R),\qquad F_1z_1\in C(\mathbb R),\qquad F_2z_2\in C(\mathbb R),
\end{equation}
\begin{equation}\label{4.9}
\|g\|_{C(\mathbb R)}\le\frac{36}{35}\|f\|_{ C(\mathbb R)},
\end{equation}
\begin{equation}\label{4.10}
\| F_1z_1\|_{C(\mathbb R)}\le\frac{\|z_1\|_{C(\mathbb R)}}{70},\qquad \| F_2z_3\|_{C(\mathbb R)}\le \frac{6}{35}\|z_2\|_{ C(\mathbb R)}.
\end{equation}
\end{lem}

\begin{proof}
Inclusions \eqref{4.8} follow from \eqref{4.1}, \eqref{4.2}, \eqref{4.3} and estimates \eqref{4.9} and \eqref{4.10}. Estimate \eqref{4.9} follows from \eqref{4.3}. Consider \eqref{4.10}. We have
\begin{align*}
\|Aqz_1\|_{C(\mathbb R)}&=\sup_{x\in\mathbb R}q(x)\left|\int_{x-\varphi(x)}^xq(\xi)(\xi-x+\varphi(x))z_1(\xi)d\xi\right|\\
 &\le \sup_{x\in\mathbb R}  q(x)^2\left[\int_{x-\varphi(x)}^x\frac{q(\xi)}{q(x)}(\xi-x+\varphi(x))d\xi\right]\cdot\|z_1\|_{C(\mathbb R)}\\
&\le \frac{a}{2}\sup_{x\in\mathbb R}(q(x)\varphi(x))^2\cdot\|z_1\|_{C(\mathbb R)}\le\frac{1}{72}\|z_1\|_{C(\mathbb R)}\quad\Rightarrow\ \text{(see \eqref{4.3})}:\\
\|F_1z_1\|_{C(\mathbb R)}&\le\sum_{n=1}^\infty \|A^n(qz_1\|_{C(\mathbb R)}\le\sum_{n=1}^\infty\|A\|_{C(\mathbb R)\to C(\mathbb R)}^{n-1}\cdot\|A(qz_1)\|_{C(\mathbb R)}\\
&\le \left(1-\frac{1}{36a}\right)^{-1}\cdot\frac{\|z_1\|}{72}\le\frac{36}{35}\frac{\|z_1\|_{C(\mathbb R)} }{72}=\frac{\|z_1\|}{70}\quad\Rightarrow\eqref{4.10}.
\end{align*}
Similarly,
\begin{align*}
\|F_2z_2\|_{C(\mathbb R)}&=\sup_{x\in\mathbb R}\|(E-A)^{-1}(q\varphi z_2)\|_{C(\mathbb R)}\\
 &\le\|(E-A)^{-1}\|_{C(\mathbb R)\to C(\mathbb R)}  \cdot\frac{\|z_2\|_{C(\mathbb R)}}{6\sqrt{a}}\le \frac{6}{35}\|z_2\|_{C(\mathbb R)}\quad\Rightarrow\quad \eqref{4.10}.
\end{align*}

Consider the system \eqref{3.17}, where we set (see \eqref{4.1}, \eqref{2.10}, \eqref{2.14}, \eqref{4.4}, \eqref{4.5}, \eqref{4.6} and \eqref{4.7}):
\begin{equation}\label{4.11}
f_1(x)=(Gg)(x),\qquad f_2(x)=\frac{d}{dx}(Gg)(x),\qquad x\in R,
\end{equation}
\begin{equation}\label{4.12}
(B_{11}z_1)(x) =-(GF_1z_1)(x),\qquad x\in R,\\
\end{equation}
\begin{equation}\label{4.13}
(B_{12}z_2)(x) =-(GF_2z_2)(x),\qquad x\in R,\\
\end{equation}
\begin{equation}\label{4.14}
(B_{21}z_1)(x) =-\left(\frac{d}{dx}GF_1z_1\right)(x),\qquad x\in R,\\
\end{equation}
\begin{equation}\label{4.15}
(B_{22}z_2)(x) =-\left(\frac{d}{dx}GF_2z_2 \right)(x),\qquad x\in R.\\
\end{equation}
\end{proof}

\begin{lem}\label{lem4.3}
We have the following estimates for the norms of the operator $B_{ij},$ $i,j=1,2$ (see \eqref{4.12}, \eqref{4.13}, \eqref{4.14} and \eqref{4.15}):
 \begin{equation}\label{4.16}
\|B_{ij}\|_{C(\mathbb R)\to C(\mathbb R)}\le\frac{1}{4},\qquad i,j=1,2.
\end{equation}
\end{lem}

\begin{proof}
The assertion of the lemma follows from \eqref{2.14}, \eqref{3.2},  \eqref{3.3}, \eqref{4.2}, \eqref{4.3} and \eqref{4.10}. For example, for $i=j=1$ and $i=j=2,$ respectively, we have
\begin{align*}
\|B_{11}z_1\|_{C(\mathbb R)}&=\|GF_1z_1\|_{C(\mathbb R)}\le\|G\|_{C(\mathbb R)\to C(\mathbb R)}\cdot \|F_1z_1\|_{C(\mathbb R)}\le \frac{1}{70}\|z_1\|_{C(\mathbb R)}\quad\Rightarrow \eqref{4.16};\\
\|B_{22}z_2\|_{C(\mathbb R)}&=\left\|\frac{d}{dx}GF_2z_2\right\|_{C(\mathbb R)}\le\left\|\frac{d}{dx}G\right\|_{C(\mathbb R)\to C(\mathbb R)}\cdot\|F_2z_2\|_{C(\mathbb R)}\\
&\le \frac{6\sqrt 2}{35}\|z_2\|_{C(\mathbb R)}<\frac{1}{4}\|z_2\|_{C(\mathbb R)}\quad\Rightarrow\eqref{4.16}.\end{align*}
\end{proof}

{}From Lemmas \ref{lem4.3} and \ref{lem3.7} it follows that the system of equations
\begin{equation}\label{4.17}
\begin{cases}
z_1(x)=[G(E-A)^{-1}f](x)-[GF_1z_1](x)+[GF_2z_2](x)\\
&,\quad x\in\mathbb R\\
z_2(x)=\left[\frac{d}{dx}G(E-A)^{-1}f\right](x)-\left[\frac{d}{dx}GF_1z_1\right](x)+\left[\frac{d}{dx}
GF_2z_2\right](x)\end{cases}
\end{equation}
has a unique solution $z(x)\in C_2(\mathbb R) $ (see \eqref{3.20}), and (see \eqref{4.6}, \eqref{3.2}, \eqref{3.3},\eqref{3.19}, \eqref{4.4} and \eqref{4.11}), we have
\begin{equation}\label{4.18}
\|z_1\|_{C(\mathbb R)}+\|z_2\|_{C(\mathbb R)}\le \left[\|G(E-A)^{-1}f\|_{C(\mathbb R)}+\left\|\frac{d}{dx} G(E-A)^{-1}f\right\|_{C(\mathbb R)}\right]\le c\|f\|_{C(\mathbb R)}.
\end{equation}

Note that there is a relationship between the functions $z_1(x)$ and $z_2(x)$ that can be checked in a straightforward way (see \eqref{4.17}):
$$z_2(x)=z_1'(x),\qquad x\in\mathbb R.$$
Denote
\begin{equation}\label{4.19}
y(x)=z_1(x),\quad x\in\mathbb R\quad \Rightarrow\quad y'(x)=z_1'(x)=z_2(x),\quad x\in\mathbb R.
\end{equation}
By \eqref{4.19}, the first equation in \eqref{4.17} and the estimate \eqref{4.18} take the form \eqref{4.20} and \eqref{4.21}, respectively:
\begin{equation}\label{4.20}
y(x)=(G(E-A)^{-1}f)(x)-(GF_1y)(x)+(GF_2y)(x),\qquad x\in\mathbb R,
\end{equation}
\begin{equation}\label{4.21}
\|y\|_{C(\mathbb R)}+\|y'\|_{C(\mathbb R)}\le c\|f\|_{C(\mathbb R)}.
\end{equation}
{}From \eqref{4.20} it follows that
\begin{equation}\label{4.22}
y(x)=(Gw)(x),\quad x\in\mathbb R;\qquad w(x)=((E-A)^{-1}f)(x)-(F_1y')(x)+(F_2y)(x),
\end{equation}
and (see \eqref{4.9}, \eqref{3.2}, \eqref{3.3}, \eqref{4.21}, we have
\begin{equation}\label{4.23}
\|w\|_{C(\mathbb R)}\le c\|f\|_{C(\mathbb R)}.
\end{equation}

Thus, by Theorems \ref{thm2.6}, \ref{thm2.7} and \ref{thm3.5}, \eqref{4.22} and \eqref{4.23}, we get
\begin{equation}\label{4.24}
-y''(x)+q(x)y(x)=w(x)=((E-A)^{-1}f)(x)-(F_1y)(x)+(F_2y')(x),\quad x\in\mathbb R,
\end{equation}
\begin{equation}\label{4.25}
\|y'' \|_{C(\mathbb R)}+\|qy\|_{C(\mathbb R)}\le c\|w\|_{C(\mathbb R)}\le c\|f\|_{C(\mathbb R)}.
\end{equation}
Since $qy\in C(\mathbb R)$ (see \eqref{4.21} and \eqref{4.25}) and
$$((E-A)^{-1}qy)(x)=q(x)y(x)+(F_1y)(x),\qquad x\in\mathbb R,$$
we obtain, by combining the last equality with \eqref{4.24}, that
$$-y''(x)=(E-A)^{-1}[f(x)-q(x)y(x)+q(x)\varphi(x)y'(x)],\qquad x\in\mathbb R.$$
But $y''\in C(\mathbb R), $
and therefore
\begin{gather}
-y''(x)+(Ay'')(x)=f(x)-q(x)y(x)+q(x)\varphi(x)y'(x),\quad x\in\mathbb R\quad\Rightarrow\nonumber\\
-y''(x)=q(x)\left[y(x)-\varphi(x)y'(x)+\int_{x-\varphi(x)}^xy''(\xi)(\xi-x+\varphi(x)d\xi\right]=f(x),\ \ x\in\mathbb R.\label{4.26}
\end{gather}

{}From \eqref{4.26} and \eqref{2.16} we obtain \eqref{1.1}, i.e., $y(x),$ $x\in\mathbb R,$ is a solution of \eqref{1.1}, and we have the estimate \eqref{1.3} (see \eqref{4.21}). The uniqueness of such a solution \eqref{1.1} follows from the linearity of this equation and the estimate \eqref{1.3}.
{}From \eqref{4.25} and the triangle inequality, we get \eqref{1.5}.
\end{proof}

\begin{proof}[Proof of \corref{cor1.2}]
The estimate \eqref{1.7} follows from \thmref{thm1.1} and \eqref{4.25}.\end{proof}

\section{Example}
Below we consider equation \eqref{1.1} with
\begin{equation}\label{5.1}
q(x)=2(1+x^2)+(1+x^2)\sin(|x|^2),\qquad x\in\mathbb R.
\end{equation}
The function \eqref{5.1} satisfies \eqref{1.2}, and with the help of \thmref{thm1.1}, we show that such an equation \eqref{1.1} is correctly solvable in the space $C(\mathbb R)$ if $\sigma:=\frac{1}{31}$ (see \eqref{1.5}). To prove this fact, we use the following simple lemma, which can be useful for checking condition \eqref{1.4}.

\begin{lem}\label{lem5.1} Suppose that we are given a function $q(x),$ $x\in\mathbb R$, and \eqref{1.2} holds. If there is a positive, continuously differentiable function $q_1(x)$ for $x\in\mathbb R $ such that
\begin{enumerate}
\item[1)] for all $x\in\mathbb R,$ we have the inequalities
\begin{equation}\label{5.2}
 \nu^{-1}q_1(x)\le q(x)\le \nu q_1(x)
\end{equation}
where the constant $\nu\in[1,\infty)$ does not depend on the choice of a point $x\in \mathbb R$;
\item[2)] $s<\infty$ where
\begin{equation}\label{5.3}
s=\sup_{x\in\mathbb R}\frac{|q_1'(x)|}{q_1(x)}.
\end{equation}
Then the function $q$ satisfies condition \eqref{1.4} for $a=\nu^2e^s.$
\end{enumerate}
\end{lem}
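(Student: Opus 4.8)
The plan is to prove the two-sided inequality \eqref{1.4} directly from the hypotheses on $q_1$, using the estimate \eqref{5.3} to control how much $q_1$ can vary on an interval of length $2$. The comparison \eqref{5.2} then transfers this bound from $q_1$ to $q$, at the cost of a factor $\nu^2$.

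First I would fix $x\in\mathbb R$ and $t\in[x-1,x+1]$, and estimate the ratio $q_1(t)/q_1(x)$. Since $q_1$ is positive and continuously differentiable, $\ln q_1$ is well-defined and differentiable, and
\begin{equation*}
\left|\ln q_1(t)-\ln q_1(x)\right|=\left|\int_x^t\frac{q_1'(\xi)}{q_1(\xi)}\,d\xi\right|\le s\,|t-x|\le s,
\end{equation*}
where the middle inequality uses \eqref{5.3} and $|t-x|\le 1$. Exponentiating gives $e^{-s}\le q_1(t)/q_1(x)\le e^{s}$ for all such $t$.

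Next I would combine this with \eqref{5.2}. For $t\in[x-1,x+1]$ we have, using the upper bound in \eqref{5.2} at $t$, then the bound just obtained, then the lower bound in \eqref{5.2} at $x$,
\begin{equation*}
q(t)\le \nu\,q_1(t)\le \nu e^{s}q_1(x)\le \nu^2 e^{s}q(x),
\end{equation*}
and symmetrically $q(t)\ge \nu^{-1}q_1(t)\ge \nu^{-1}e^{-s}q_1(x)\ge \nu^{-2}e^{-s}q(x)$. Setting $a=\nu^2 e^{s}$, which satisfies $a\ge 1$ since $\nu\ge 1$ and $s\ge 0$, we obtain $a^{-1}q(x)\le q(t)\le a\,q(x)$ for all $t\in[x-1,x+1]$, which is exactly \eqref{1.4}. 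As $x$ was arbitrary, this completes the proof.

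There is no real obstacle here: the only point requiring a moment's care is the justification that $s<\infty$ (hypothesis 2) is exactly what is needed to pass from a pointwise bound on the logarithmic derivative to a bound on the multiplicative oscillation of $q_1$ over a unit interval; everything else is the chain of inequalities above. One should also note that the lemma as stated is genuinely useful in practice because verifying \eqref{5.2} and \eqref{5.3} for a smooth comparison function $q_1$ is typically far easier than checking \eqref{1.4} for $q$ directly, as the example \eqref{5.1} (with $q_1(x)=1+x^2$) will illustrate.
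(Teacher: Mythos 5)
Your proof is correct and follows essentially the same route as the paper: bound $\left|\ln q_1(t)-\ln q_1(x)\right|$ by $s|t-x|\le s$ using \eqref{5.3}, exponentiate to control $q_1(t)/q_1(x)$ on $[x-1,x+1]$, and then transfer the bound to $q$ via \eqref{5.2} at the cost of a factor $\nu^2$, yielding $a=\nu^2 e^{s}$. No gaps; the chain of inequalities matches the paper's argument step for step.
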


\begin{proof}
Let $t\in[x-1,x+1],$ $x\in\mathbb R.$ In the following relations, we use \eqref{5.2} and \eqref{5.3}:
\begin{align*}
 \left|
 \ln\frac{q_1(t)}{q_1(t)}\right|&=|\ln q_1(t)-\ln q_1(x)|
  =\left|\int_x^t\frac{q_1'(\xi)}{q_1(\xi)}d\xi
  \right|\\
&\le\left|\int_x^t\frac{|q_1'(\xi)|}{q_1(\xi)}d\xi\right|\le s|x-t|\le s\qquad\Rightarrow
\end{align*}
\begin{gather*}
 \qquad\qquad\qquad\qquad e^{-s}\le\frac{q_1(t)}{q_1(x)}\le e^s,\qquad\qquad \quad |t-x|\le 1\qquad\Rightarrow\\ \frac{q(t)}{q(x)}=\frac{q(t)}{q_1(t)}\cdot\frac{q_1(t)}{q_1(x)}\cdot\frac{q_1(x)}{q(x)}\le\nu^2e^s,\qquad |t-x|\le 1,\\
\frac{q(t)}{q(x)}=\frac{q(t)}{q_1(t)}\cdot\frac{q_1(t)}{q_1(x)}\cdot\frac{q_1(x)}{q(x)}\ge\frac{1}
{\nu^2}e^{-s},\qquad |t-x|\le 1.
\end{gather*}
In the case \eqref{5.1}, clearly, $q_1(x)=1+x^2,$ $x\in\mathbb R$, because
$$\frac{1+x^2}{3}<1+x^2\le 2(1+x^2)+(1+x^2)\sin(|x|^2)\le 3(1+x^2),\qquad x\in\mathbb R,$$
and therefore $\nu=3.$ In addition, $s=1$ because
$$s=\sup_{x\in\mathbb R}\frac{|q_1'(x)|}{q_1(x)}=\sup_{x\in\mathbb R}\frac{2|x|}{1+x^2}\le 1.$$
Hence
$$\frac{1}{6\sqrt a}=\frac{1}{6}\cdot\frac{1}{3\sqrt e}=\frac{1}{18\sqrt e}\ge \frac{1}{31}=\sigma,$$
as required.
\end{proof}


\begin{thebibliography}{10}

 \bibitem[1]{1} N.V. Azbelev, S. Yu. Kultyshev, V.Z. Tsalynk, \emph{Functional Differential Equations and Variational Problems}, R \& C Dynamics, 2006 (in Russian).

  \bibitem[2]{5} N. Chernyavskaya and L. Shuster, \emph{A criterion for correct solvability of the Sturm-Liouville equation in the space Lp(R)}, Proc. Amer. Math. Soc. \textbf{130} (2002),  1043-1054.



\bibitem[3]{2} N. Chernyavskaya and L. Shuster, \emph{Weight summability of solutions of the Sturm-Liouville equation},
 J. Differential Equations \textbf{151} (1999), no. 2, 456-473.

 \bibitem[4]{6} N. Chernyavskaya and L. Shuster, \emph{Regularity of the inversion problem for a Sturm-Liouville equation in Lp(R)}, Methods Appl. Anal. \textbf{7} (2000),   65-84.

 \bibitem[5]{3} N. Chernyavskaya and L. Shuster, \emph{Classification of initial data for the Riccati equation}, Boll. Unione Mat. Ital. Sez. B Artic. Ric. Mat.  \textbf{5} (2002),   511-525.

      \bibitem[6]{4} N. Chernyavskaya and L. Shuster, \emph{Estimates for Green's function of the Sturm-Liouville operator},  J. Differential Equations \textbf{111} (1994),  410-420.




  \bibitem[7]{7} L.E. Elsgolz and S.B. Norkin, \emph{Introduction to the Theory of Differential Equations with Retarded Argument},
 Moscow, Nauka, 1971 (in Russian).



\bibitem[8]{8} W.N. Everitt and M. Giertz, \emph{Some properties of the domains of
certain differential operators}, Proc. London Math. Soc.
\textbf{23} (1971), no.~3, 301-324.

\bibitem[9]{9} W.N. Everitt and M. Giertz, \emph{Some inequalities associated with
certain differential operators}, Math. Z. \textbf{126} (1972), no.~4,
308-326.

\bibitem[10]{10} E. Goursat, \emph{Course in Mathematical Analysis}, Vol.~1, part 1, New York, 1959.

\bibitem[11]{11} J. Hale, \emph{Theory of Functional Differential Equations}, Springer-Verlag, New York, Heidelberg, Berlin, 1977.

\bibitem[12]{12} A.D Myshkis, \emph{Linear Differential Equations with Retarded Arguments}, 2nd ed., Moscow, Nauka, 1972 (in Russian).

\bibitem[13]{13} S.B. Norkin, \emph{Differential Equations of the Second Order with Retarded Argument. Some problems of the theory of vibrations of systems with retardation},   Translations of Mathematical Monographs, Vol. 31, American Mathematical Society, Providence, R.I., 1972.


  \end{thebibliography}
\end{document}